\newcommand{\punct}[1]{\makebox[0pt][l]{\,#1}} 
\definecolor{dblue}{rgb}{0,0,.6}
\newtheoremstyle{thms}{1em}{0pt}{\itshape}{}{\bfseries}{.}{ }{} 
\theoremstyle{thms}
\newtheorem{theorem}{Theorem}[section]                      
\newaliascnt{corollary}{theorem}                            
\newtheorem{corollary}[corollary]{Corollary}
\newaliascnt{lemma}{theorem}                                
\newtheorem{lemma}[lemma]{Lemma}
\newaliascnt{proposition}{theorem}                          
\newtheorem{proposition}[proposition]{Proposition}
\newaliascnt{question}{theorem}                             
\newtheorem{thm}{Theorem}                                   
\newtheoremstyle{defs}{1em}{0pt}{}{}{\bfseries}{.}{ }{} 
\theoremstyle{defs}
\newaliascnt{definition}{theorem}                           
\newtheorem{definition}[definition]{Definition}
\newaliascnt{remark}{theorem}                               
\newtheorem{remark}[remark]{Remark}
\newaliascnt{setup}{theorem}                                
\newtheorem{setup}[setup]{Setup}
\LetLtxMacro\oldproof\proof                                 
\renewcommand{\proof}[1][Proof]{\oldproof[#1]\unskip}
\setlist{itemsep=0em,topsep=0cm,partopsep=0em,parsep=\lineskip}
\setlist[enumerate]{label=\normalfont(\arabic*)}
\newcommand*{\OO}{\ensuremath{\mathscr{O}}}
\newcommand*{\PP}{\ensuremath{\mathbf{P}}}
\newcommand*{\CC}{\ensuremath{\mathbf{C}}}
\newcommand*{\ZZ}{\ensuremath{\mathbf{Z}}}
\newcommand*{\PROJ}{\ensuremath{\mathbf{Proj}}}
\newcommand*{\Sym}{\ensuremath{\mathbf{Sym}}}
\newcommand*{\sTor}{\ensuremath{\mathscr{T}\!or}}
\patchcmd{\@settitle}{\uppercasenonmath\@title}{}{}{}
\patchcmd{\@settitle}{\bfseries}{\Large\bfseries\scshape}{}{} 
\patchcmd{\@setauthors}{\MakeUppercase}{\large\scshape}{}{} 
\patchcmd{\@setabstracta}{\vskip}{\advance\skip@40\p@ \vskip}{}{} 
\begin{document}

\renewcommand{\sectionautorefname}{Section}

\title[Hodge numbers in positive characteristic]{The construction problem for Hodge numbers modulo an integer in positive characteristic}
\author{Remy van Dobben de Bruyn}
\address{Department of Mathematics \\ Princeton University \\ Fine Hall \\ Washington Road \\ Princeton, NJ 08544 \\ United States of America}
\address{Institute for Advanced Study \\ 1 Einstein Drive \\ Princeton, NJ 08540 \\ United States of America}
\email{rdobben@math.princeton.edu}
\author{Matthias Paulsen}
\address{Institute of Algebraic Geometry \\ Gottfried Wilhelm Leibniz Universit\"at Hannover \\ Welfengarten 1 \\ \makebox{D-30167} Hannover \\ Germany}
\email{paulsen@math.uni-hannover.de}
\keywords{Hodge numbers, construction problem, positive characteristic}
\subjclass[2010]{14F99 (primary); 14G17, 14A10, 14E99, 14F40 (secondary)}
\date{15 December 2020} 

\begin{abstract}
Let $k$ be an algebraically closed field of positive characteristic.
For any integer $m\ge2$, we show that the Hodge numbers of a smooth projective $k$-variety
can take on any combination of values modulo~$m$, subject only to Serre duality.
In particular, there are no non-trivial polynomial relations between the Hodge numbers.
\end{abstract}

\vspace*{-3em}
\begin{center}
\noindent\makebox[\linewidth]{\rule{16cm}{0.4pt}}
\vspace{8em}

\noindent\makebox[\linewidth]{\rule{13cm}{0.4pt}}
\end{center}

\vspace*{-11.1em}

\maketitle

\vspace*{-1em}

\section{Introduction}
The Hodge numbers $h^{p,q}(X)=\dim_\CC H^q(X,\Omega_X^p)$ of an $n$-dimensional smooth projective
variety $X$ over $\CC$ satisfy the following conditions:
\begin{enumerate}
    \item\label{condition:h00} $h^{0,0}(X)=1$ (connectedness);
    \item\label{condition:serre} $h^{p,q}(X)=h^{n-p,n-q}(X)$ for all $0\le p,q\le n$ (Serre duality);
    \item\label{condition:hodge} $h^{p,q}(X)=h^{q,p}(X)$ for all $0\le p,q\le n$ (Hodge symmetry).
\end{enumerate}
Kotschick and Schreieder showed \cite[Thm.~1, consequence (2)]{kotschick-schreieder} that the only linear relations among
the Hodge numbers that are satisfied by \emph{all} smooth projective $\CC$-varieties of dimension~$n$
are the ones induced by \ref{condition:h00}, \ref{condition:serre}, and \ref{condition:hodge}.

In positive characteristic, Hodge symmetry \ref{condition:hodge} does not always
hold \cite[Prop.~16]{serre}, but Serre duality \ref{condition:serre} is still true.
The first author proved that \ref{condition:h00} and \ref{condition:serre} are indeed
the only universal \emph{linear} relations among the Hodge numbers of $n$-dimensional smooth projective
$k$-varieties if $\operatorname{char} k>0$ \cite[Thm.~1]{vDdB}.

In \cite[Thm.~2]{paulsen-schreieder}, the second author and Schreieder solved the construction
problem over $\CC$ for Hodge diamonds modulo an arbitrary integer $m\ge2$.
This means that for any dimension~$n$ and any collection of integers satisfying the conditions
\ref{condition:h00}, \ref{condition:serre}, and \ref{condition:hodge},
there exists a smooth projective $\CC$-variety of dimension~$n$ whose Hodge numbers
agree with the given integers modulo~$m$. As a corollary, there are no non-trivial \emph{polynomial}
relations among the Hodge numbers, which strengthens the result from \cite{kotschick-schreieder}
on linear relations.

In this paper, we solve the construction problem for Hodge diamonds modulo $m$
in positive characteristic:

\begin{thm}\label{theorem:main}
Let $k$ be an algebraically closed field of positive characteristic,
and let $m\ge2$ and $n\ge0$ be integers.
Let $(a^{p,q})_{0\le p,q\le n}$ be any collection of integers
such that $a^{0,0}=1$ and $a^{p,q}=a^{n-p,n-q}$ for all $0\le p,q\le n$.
Then there exists a smooth projective $k$-variety $X$ of dimension~$n$ such that
\[
h^{p,q}(X)\equiv a^{p,q} \pmod m
\]
for all $0\le p,q\le n$.
\end{thm}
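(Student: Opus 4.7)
The plan is to adapt the modular blow-up construction of Paulsen-Schreieder \cite{paulsen-schreieder} to positive characteristic, feeding into it the Hodge-asymmetric varieties from the first author's earlier paper \cite{vDdB}.

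Starting from the connected base $X_0=\PP^n$, whose Hodge diamond satisfies $h^{p,q}=1$ for $p=q$ and $0$ otherwise, one modifies the Hodge numbers by iterated blow-ups along smooth closed subvarieties. For $Z\subset X$ smooth closed of codimension $c$ in a smooth $n$-fold, the blow-up formula
\begin{equation*}
h^{p,q}(\operatorname{Bl}_Z X) - h^{p,q}(X) \;=\; \sum_{i=1}^{c-1} h^{p-i,q-i}(Z)
\end{equation*}
holds in any characteristic (by a direct computation with $\Omega^\bullet$ on the blow-up). It is additive in disjoint centres, so blowing up $r$ disjoint translates of the same $Z$ inside $X$ scales the contribution by $r$, which gives access to all residues modulo~$m$; blow-ups preserve connectedness, so $h^{0,0}=1$ is automatic throughout.

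The problem thus reduces to producing, for each Serre-duality orbit $\{(p,q),(n-p,n-q)\}$ with $(p,q)\neq(0,0)$, a building-block variety whose Hodge diamond can be used to realise a prescribed shift at that orbit modulo~$m$. For Hodge-symmetric orbits one can import (characteristic-$p$ analogues of) the blocks from \cite{paulsen-schreieder}, based on explicit varieties defined over $\ZZ$ whose Hodge numbers are characteristic-independent (products of curves, Hilbert schemes of points, and the like). For Hodge-asymmetric orbits one invokes the construction of \cite{vDdB}: the Hodge vectors of the smooth projective $k$-varieties exhibited there span, modulo the relations $h^{0,0}=1$ and Serre duality, the full asymmetric subspace in characteristic~$p$, and suitable integer combinations of them — realised by blow-ups of disjoint copies — give arbitrary shifts modulo~$m$ at every off-symmetric Serre orbit.

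The main obstacle is the cleaning step: by the blow-up formula a single block $Z$ contributes to an entire diagonal $p-q=\textup{const}$ of the target, and the asymmetric blocks from \cite{vDdB} are themselves typically supported at more than one Hodge position. The remedy is an induction on the antidiagonal $p+q$, processing Serre orbits in order of increasing $p+q$: when adjusting $h^{p,q}$ one chooses the embedding so that the lowest-$(p'+q')$ support of the block contributes exactly to the current orbit (so that no contamination occurs at already-settled smaller antidiagonals), and all contamination propagating to strictly larger $p+q$ is absorbed when those orbits are processed in later steps. After finitely many iterations every Hodge number matches the prescribed $a^{p,q}$ modulo~$m$, completing the proof.
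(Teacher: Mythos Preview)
Your proposal has a fatal gap at the very first step: blowups cannot touch the outer Hodge numbers. In the blowup formula
\[
h^{p,q}(\operatorname{Bl}_Z X) - h^{p,q}(X) = \sum_{i=1}^{c-1} h^{p-i,q-i}(Z),
\]
every summand on the right vanishes when $p=0$ or $q=0$ (and by Serre duality when $p=n$ or $q=n$), so $h^{0,q}$ and $h^{p,0}$ are preserved under any sequence of blowups. More generally, all outer Hodge numbers are birational invariants in positive characteristic by Chatzistamatiou--R\"ulling. Starting from $\PP^n$, which has $h^{0,q}=0$ for $q>0$, you will therefore never realise a target with $a^{0,1}\not\equiv 0\pmod m$, no matter which centres you blow up. The paper handles this by first constructing, via hyperplane sections and Euler-characteristic manipulations (\autoref{proposition:outer}), a variety with the prescribed outer Hodge numbers, and only then passes to blowups for the inner ones.

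There is a second gap even within the inner part. You need your building blocks to sit as smooth closed subvarieties of the ambient $n$-fold; over $\CC$ this is achieved by embedded resolution of an arbitrary birational model inside $\PP^{n-1}$, but embedded resolution is unknown in positive characteristic beyond dimension~$3$. The paper circumvents this by reducing to embedded resolution of \emph{surfaces} (which is known) combined with Maruyama's elementary transformations of projective bundles, exploiting the fact that Hodge asymmetry is ``generated'' by surfaces. Your sketch does not address how the asymmetric blocks from \cite{vDdB} are to be smoothly embedded, and as written the argument implicitly assumes resolution in arbitrary dimension (cf.\ \autoref{remark:resolution}).
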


In analogy to \cite[Cor.~3]{paulsen-schreieder}, it follows that there are no polynomial relations
among the Hodge numbers in positive characteristic besides \ref{condition:h00} and \ref{condition:serre}
(see \autoref{corollary:poly}).
This extends the result from \cite[Thm.~1]{vDdB} on linear relations.

\autoref{theorem:main} also shows that Hodge symmetry may fail arbitrarily badly in positive characteristic.
For any dimension~$n$ and all $0\le p<q\le n$ with $p+q\ne n$, not only can the Hodge numbers $h^{p,q}$ and $h^{q,p}$ be different, but they can even be incongruent modulo any integer $m\ge2$.
Note that Hodge symmetry \ref{condition:hodge} is a consequence of Serre duality \ref{condition:serre}
if $p+q=n$, and thus always holds in the middle row of the Hodge diamond.

A complete classification of the possible Hodge diamonds of smooth projective $k$-varieties,
i.\,e.\ a version of \autoref{theorem:main} without the `modulo~$m$' part, seems to be very hard
already when Hodge symmetry is true; see \cite{schreieder} for strong partial results on this
in characteristic zero.

The structure of our proof is similar to \cite{paulsen-schreieder}, with some improvements.
First we solve the construction problem modulo~$m$ for the outer Hodge numbers, i.\,e.\ the Hodge numbers
$h^{p,q}$ with $p\in\{0,n\}$ or $q\in\{0,n\}$ (see \autoref{proposition:outer}).
Then we prove that for any smooth projective $k$-variety, there exists a sequence of blowups
in smooth centres such that the inner Hodge numbers of the blowup,
i.\,e.\ the Hodge numbers $h^{p,q}$ with $1\le p,q\le n-1$,
attain any given values in $\ZZ/m$ satisfying Serre duality \ref{condition:serre}.
Hence we obtain the following result, which might be of independent interest:

\begin{thm}\label{theorem:birational}
Let $k$ be an algebraically closed field of positive characteristic,
and let $m\ge2$ and $n\ge0$ be integers.
Let $X$ be a smooth projective $k$-variety of dimension~$n$ and let $(a^{p,q})_{1\le p,q\le n-1}$
be any collection of integers such that $a^{p,q}=a^{n-p,n-q}$ for all $1\le p,q\le n-1$.
Then there exists a smooth projective $k$-variety $\tilde X$ birational to $X$ such that
\[
h^{p,q}(\tilde X)\equiv a^{p,q} \pmod m
\]
for all $1\le p,q\le n-1$.
\end{thm}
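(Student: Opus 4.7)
The argument rests on the blowup formula for Hodge numbers: if $Z \subset X$ is a smooth closed subvariety of codimension $c \ge 2$ in a smooth projective $k$-variety of dimension $n$, then
\[
h^{p,q}(\mathrm{Bl}_Z X) \;=\; h^{p,q}(X) + \sum_{i=1}^{c-1} h^{p-i,q-i}(Z)
\]
for all $0 \le p, q \le n$. This identity is characteristic-free, being a consequence of the projective bundle formula for coherent cohomology on the exceptional divisor together with the standard description of the relative cotangent sheaf of a smooth blowup. The decisive feature is that a single blowup modifies $h^{p,q}$ only along the fixed diagonal $p - q = \mathrm{const}$; since Serre duality pairs $(p,q)$ with $(n-p,n-q)$ on that same diagonal, Serre symmetry of the modifications is automatic on any blowup, and one may treat the diagonals independently.

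The plan is to realise $\tilde X$ as an iterated blowup of $X$ along smooth centres. First, I would blow up a closed point of $X$, producing an exceptional divisor $E \cong \PP^{n-1}$. Inside $E$, after possibly replacing a candidate building block $W$ by a product $W \times \PP^r$, one can embed any smooth projective $k$-variety of sufficiently small dimension as a smooth closed subvariety of $\mathrm{Bl}_{\mathrm{pt}} X$. For each diagonal $d$ with $|d| \le n - 2$ that meets the inner region, I would construct a library of smooth projective building blocks whose Hodge numbers, inserted into the formula above at the right codimension, contribute a controlled residue modulo $m$ to one chosen inner entry on diagonal $d$ (and, automatically, its Serre-dual). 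A suitable sequence of blowups along these building blocks — embedded into $\PP^{n-1} \subset \mathrm{Bl}_{\mathrm{pt}} X$ or into the fresh $\PP^{n-1}$ exceptional divisors created at later stages — then realises any prescribed symmetric residues $a^{p,q} \pmod{m}$ for $1 \le p, q \le n - 1$ by solving independent congruences on each diagonal.

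The main obstacle is the \emph{construction of the library of building blocks in positive characteristic}. The complex analogue in \cite{paulsen-schreieder} draws on Hodge symmetry and a rich supply of classical examples such as abelian varieties, Enriques surfaces, and products thereof. In characteristic $p > 0$, Hodge symmetry may fail, which is simultaneously a hindrance (some convenient classical tools are unavailable or behave differently) and an asset: a variety with $h^{p,q} \neq h^{q,p}$ can be used to target a single Serre-duality orbit rather than a symmetric pair, which will be essential since the theorem allows $a^{p,q} \not\equiv a^{q,p} \pmod m$. The technical heart of the argument is to exhibit, for every $m$ and every positive characteristic, enough smooth projective $k$-varieties of controlled dimension whose diagonal Hodge contributions generate $\ZZ/m$ in each inner position of the Hodge diamond; natural candidates are products of projective spaces and abelian varieties, together with the asymmetric surfaces and threefolds that must already be produced in order to handle the outer Hodge numbers (\autoref{proposition:outer}). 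Once such a library is assembled, realising the prescribed residues reduces to a finite combinatorial bookkeeping step over $\ZZ/m$.
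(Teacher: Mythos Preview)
Your overall strategy---using the blowup formula to adjust inner Hodge numbers diagonal by diagonal, with asymmetric building blocks ultimately sourced from \autoref{proposition:outer}---matches the paper's. However, there is a genuine gap at the embedding step. You assert that ``one can embed any smooth projective $k$-variety of sufficiently small dimension as a smooth closed subvariety of $\mathrm{Bl}_{\mathrm{pt}} X$,'' presumably by taking a birational model inside $\PP^{n-1}$. But a birational image in projective space is typically singular, and upgrading it to a smooth model \emph{inside the ambient variety} requires embedded resolution of singularities, which is unknown in positive characteristic beyond dimension~$3$. This is not a minor technicality: to modify $h^{r,1}$ and $h^{1,r}$ asymmetrically for $r \in \{2,\ldots,n-2\}$ you need a smooth centre of dimension~$r$ with prescribed (and in general unequal) $h^{r-1,0}$ and $h^{0,r-1}$, and such a centre---of dimension up to $n-2$---cannot simply be ``placed'' inside $\PP^{n-1}$ without resolution. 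Your ``library'' exists abstractly, but the problem is realising its members as smooth subvarieties of a blowup of $X$.

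The paper circumvents this as follows. Via induction on $n$ and \autoref{corollary:second-outer}, everything reduces to producing, inside some blowup of $X$, a smooth subvariety $W$ of each dimension $d \in \{2,\ldots,n-2\}$ with controlled $h^{d-1,0}$ and $h^{0,d-1}$ (\autoref{lemma:asymmetric subvarieties}). For this the paper (i)~uses embedded resolution of \emph{surfaces}---which \emph{is} known---to realise a surface $S$ with the desired asymmetry as a smooth subvariety; (ii)~blows up $S$, obtaining an exceptional $\PP^{n-3}$-bundle $\PP_S(\mathscr E)$; and (iii)~invokes Maruyama's elementary transformations to pass, by further blowups in smooth centres, from $\PP_S(\mathscr E)$ to the \emph{trivial} bundle $S \times \PP^{n-3}$, inside which $S \times Z$ (for $Z$ a suitable hypersurface) sits smoothly and has the required outer Hodge numbers by K\"unneth. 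The paper explicitly notes in \autoref{remark:resolution} that your direct approach works if one assumes embedded resolution; the substance of the argument is precisely the workaround that avoids that assumption.
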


The analogous result in characteristic zero was obtained in \cite[Thm.~5]{paulsen-schreieder}.
The fact that all outer Hodge numbers are birational invariants in positive characteristic
was proven by Chatzistamatiou and R\"ulling \cite[Thm.~1]{chatzistamatiou-rulling},
so \autoref{theorem:birational} is the best possible statement.
Again, it follows that the result from \cite[Thm.~3]{vDdB} on linear birational invariants
extends to polynomials (see \autoref{corollary:poly-birational}).

In analogy with \cite[Thm.~2]{vDdB}, our constructions only need Serre's counterexample
\cite[Prop.~16]{serre} to generate all Hodge asymmetry.
While the structure of our argument is similar to \cite{paulsen-schreieder},
the absence of condition~\ref{condition:hodge} in positive characteristic raises new difficulties
for both the inner and the outer Hodge numbers.
There is a quick proof of \autoref{theorem:birational} assuming embedded resolution of singularities
in positive characteristic, see \autoref{remark:resolution}. The proof we present is similar, but does
a little more work to avoid using embedded resolution. It relies on Maruyama's theory of elementary transformations of vector bundles, which we briefly recall in the \hyperref[section:elementary]{Appendix}.

In \autoref{section:lemmas}, we state and prove some lemmas on Hodge numbers that are used later.
The constructions for outer and inner Hodge numbers are carried out in \autoref{section:outer}
and \autoref{section:inner}, respectively.
Finally, we deduce corollaries on polynomial relations in \autoref{section:poly}.

\subsection*{Notation} Throughout this paper, we fix an algebraically closed field $k$ of positive characteristic
and an integer $m\ge2$.

\section*{Acknowledgements}
{\small
The authors would like to thank Stefan Schreieder for his suggestion to work on this project together.
The first author thanks Raymond Cheng, H\'el\`ene Esnault, and Johan de Jong for useful conversations.
The authors are grateful to J{\k e}drzej Garnek for correcting a mistake in the proof of \autoref{lemma:lefschetz}
and to the referee for pointing out a problem in an earlier version of \autoref{lemma:asymmetric subvarieties}
as well as other helpful suggestions.

The first author is supported by the Oswald Veblen Fund at the Institute for Advanced Study.
The second author is supported by the DFG project `\foreignlanguage{ngerman}{Topologische Eigenschaften von algebraischen Variet\"aten}' (project no.\ 416054549).
The first author thanks the \foreignlanguage{ngerman}{Ludwig-Maximilians-Universit\"at M\"unchen} for their hospitality
during a visit where part of this work was carried out.
}

\numberwithin{equation}{section} 

\section{Some lemmas on Hodge numbers}\label{section:lemmas}
In this section, we collect some standard results on Hodge numbers that we will use repeatedly in the arguments.
The only difference between the situation in characteristic zero \cite{kotschick-schreieder,paulsen-schreieder}
and positive characteristic \cite{vDdB} comes from asymmetry of Hodge diamonds,
and as in \cite{vDdB} the only example we need is Serre's surface:
\begin{theorem}\label{theorem:surface}
There exists a smooth projective $k$-variety $S$ of dimension two such that $h^{1,0}(S)=0$ and $h^{0,1}(S)=1$.
\end{theorem}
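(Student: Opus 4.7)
The plan is to follow Serre's classical argument \cite[Prop.~16]{serre}: produce $S$ as a free quotient of an appropriate auxiliary variety by $G = \ZZ/p$ with $p = \operatorname{char} k$, and exploit the fact that in characteristic $p$ such a torsor contributes to $H^1(\OO)$ via Artin--Schreier theory but not to $H^0(\Omega^1)$. This asymmetry has no analogue over a field of characteristic zero.

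Concretely, I would first pick a smooth projective variety $X$ of dimension $\ge 2$ that is simply connected and has $H^0(X, \Omega_X^1) = H^1(X, \OO_X) = 0$---for instance a smooth complete intersection in $\PP^N$ of suitable degrees, whose outer Hodge numbers match those of $\PP^N$ by the Koszul resolution of its ideal sheaf. I would then realise a free $G$-action on $X$ by starting from a suitable linear $G$-action on $\PP^N$ whose fixed locus has large codimension (say cyclic permutation of coordinate blocks in sufficiently many variables) and applying a $G$-equivariant Bertini argument to place $X$ in the complement of the fixed locus. Setting $Y = X/G$, the morphism $X \to Y$ is an \'etale $G$-torsor, which identifies $G$ with the full \'etale fundamental group of $Y$ (since $X$ is simply connected) and thus gives $H^1(Y, \mathbf{F}_p) \cong \mathbf{F}_p$ in \'etale cohomology.

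The two Hodge number computations on $Y$ then run as follows. For $h^{1,0}$, the pullback inclusion $H^0(Y, \Omega_Y^1) \hookrightarrow H^0(X, \Omega_X^1) = 0$ along the \'etale cover gives $h^{1,0}(Y) = 0$. For $h^{0,1}$, the Artin--Schreier sequence $0 \to \mathbf{F}_p \to \OO_Y \xrightarrow{F-1} \OO_Y \to 0$ identifies $H^1(Y, \mathbf{F}_p)$ with the Frobenius-fixed subspace of $H^1(Y, \OO_Y)$, forcing $h^{0,1}(Y) \ge 1$. If $\dim Y > 2$, I would cut down to a smooth surface $S$ by iterated sufficiently general smooth hyperplane sections, invoking the Grothendieck--Lefschetz theorem on Picard schemes (valid in positive characteristic for ample divisors of sufficient dimension) to preserve the two outer Hodge numbers under consideration.

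The main obstacle is pinning down $h^{0,1}(S) = 1$ exactly, since the Artin--Schreier argument yields only a lower bound. Serre's construction resolves this by exhibiting $\operatorname{Pic}^0(S)$ as a nontrivial infinitesimal group scheme whose tangent space at the origin is precisely one-dimensional, so that $H^1(S, \OO_S) = k$; reproducing (or citing) this final computation is the crux of the proof.
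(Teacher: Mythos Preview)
Your outline is correct and follows the same route as the paper, which simply cites Serre's construction \cite[Prop.~16]{serre} (with \cite[Prop.~1.4]{vDdB} as a modern reference); you have faithfully reconstructed the essential steps and correctly isolated the upper bound $h^{0,1}(S)\le 1$ as the one nontrivial input to be cited. One small caveat: your phrasing of that step---that $\operatorname{Pic}^0(S)$ has one-dimensional tangent space---is a restatement of the goal rather than a method, since that tangent space \emph{is} $H^1(S,\OO_S)$; in the cited references the bound is obtained by a direct analysis of the Artin--Schreier cover (e.g.\ via the filtration on $\pi_*\OO_X$), not by an independent computation of the Picard scheme.
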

\begin{proof}
See \cite[Prop.~16]{serre}, or \cite[Prop.~1.4]{vDdB} for a short modern account.
\end{proof}
\vskip-\lastskip
We use the following well-known formula for Hodge numbers under blowups.
\begin{lemma}\label{lemma:blowup}
Let $X$ be a smooth projective $k$-variety, let $Z \subseteq X$ be a smooth subvariety of codimension~$r$,
and let $\tilde X \to X$ be the blowup of $X$ at $Z$. Then the Hodge numbers of $\tilde X$ satisfy
\[
h^{p,q}(\tilde X) = h^{p,q}(X) + \sum_{i=1}^{r-1} h^{p-i,q-i}(Z).
\]
\end{lemma}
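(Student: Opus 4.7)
This is the classical blowup formula for Hodge cohomology. Let $\pi\colon \tilde X \to X$ denote the blowup, $j\colon E \hookrightarrow \tilde X$ the exceptional divisor, and $q\colon E \to Z$ its projection; thus $E \cong \PP(\mathcal{N}_{Z/X})$ is a $\PP^{r-1}$-bundle over $Z$. Writing $\iota\colon Z \hookrightarrow X$ for the inclusion, the plan is to establish the derived-category decomposition
\[
R\pi_*\Omega^p_{\tilde X} \;\simeq\; \Omega^p_X \;\oplus\; \bigoplus_{i=1}^{r-1} \iota_*\Omega^{p-i}_Z[-i]
\]
in $D(X)$. Taking hypercohomology then immediately yields the claimed formula for $h^{p,q}(\tilde X)$.

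The two key ingredients are the projective bundle formula
\[
h^{p,q}(E) \;=\; \sum_{i=0}^{r-1} h^{p-i,\,q-i}(Z),
\]
proved by combining the relative Euler sequence on $q$ with the computation of $R^\bullet q_*\mathcal{O}_E(\ell)$ for $\ell\in\ZZ$; and the identity $R\pi_*\mathcal{O}_{\tilde X}=\mathcal{O}_X$, which follows from Zariski's main theorem together with cohomology and base change applied fibrewise along $q$. Both inputs are purely algebraic and valid over any field, independently of the characteristic.

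To assemble the decomposition, I would exhibit the splitting concretely. The pullback $\pi^*$ supplies the $\Omega^p_X$-summand, while for each $1\le i\le r-1$ the composite
\[
H^{q-i}(Z,\Omega^{p-i}_Z) \xrightarrow{q^*} H^{q-i}(E,\Omega^{p-i}_E) \xrightarrow{\cup\,\xi^{i-1}} H^{q-1}(E,\Omega^{p-1}_E) \xrightarrow{j_*} H^q(\tilde X,\Omega^p_{\tilde X}),
\]
where $\xi=c_1(\mathcal{O}_E(1))$ and $j_*$ is the Gysin map, produces the remaining summands. The bulk of the work is to check that the resulting global map is an isomorphism; this reduces, via the projection formula together with the fact that $\pi$ is an isomorphism off $E$, to a local computation near $E$ that directly uses the two ingredients above. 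The only real obstacle is to phrase every step purely sheaf-theoretically, so as to avoid Hodge-theoretic inputs such as the degeneration of the Hodge–de Rham spectral sequence, which are unavailable in positive characteristic; once this is done the classical argument carries over verbatim.
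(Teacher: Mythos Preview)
Your outline is correct and is essentially the direct sheaf-theoretic computation carried out in Gros's monograph, which is precisely what the paper cites: the paper does not reprove the formula but simply refers to \cite[Cor.~IV.1.1.11]{gros}. Your derived decomposition $R\pi_*\Omega^p_{\tilde X}\simeq\Omega^p_X\oplus\bigoplus_{i=1}^{r-1}\iota_*\Omega^{p-i}_Z[-i]$ is exactly the statement proved there, and your splitting via $q^*$, cup product with powers of $\xi$, and the Gysin map $j_*$ is the standard way to exhibit it.

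One small remark: the paper also records an alternative route, due to Achinger--Zdanowicz, deducing the formula from Voevodsky's motivic blowup decomposition together with Chatzistamatiou--R\"ulling's action of Chow groups on Hodge cohomology. That argument is more conceptual (it works for any oriented cohomology theory once one has the cycle action), whereas your approach is the hands-on computation specific to $\Omega^p$; both are valid in positive characteristic and neither requires Hodge--de~Rham degeneration, as you correctly emphasise. A minor quibble: invoking ``cohomology and base change'' for $R\pi_*\mathcal O_{\tilde X}=\mathcal O_X$ is a bit loose, since the fibre dimension jumps; the clean argument uses the filtration by $\mathcal O_{\tilde X}(-iE)$ and the vanishing of $R^jq_*\mathcal O_E(i)$ for $j>0$, $i\ge 0$.
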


A consequence that will be used repeatedly is that any blowup construction
carried out $m$~times does not change the Hodge numbers modulo~$m$.

\begin{proof}[Proof of \autoref{lemma:blowup}]
See for example \cite[Cor.~IV.1.1.11]{gros}.
As noted by Achinger and Zdanowicz \cite[Cor.~2.8]{achinger-zdanowicz}, it is also an immediate consequence of
Voevodsky's motivic blowup formula \cite[Prop.~3.5.3]{voevodsky} and Chatzistamatiou--R\"ulling's action
of Chow groups on Hodge cohomology \cite{chatzistamatiou-rulling}.
\end{proof}
\vskip-\lastskip
The Hodge numbers of a product $X_1\times X_2$ can be easily described in terms of the Hodge numbers of
$X_1$ and $X_2$ by a K\"unneth-type formula.
\begin{lemma}\label{lemma:kuenneth}
Let $X_1$ and $X_2$ be smooth projective $k$-varieties. Then the Hodge numbers of $X:=X_1\times X_2$
are given by
\[
h^{p,q}(X) = \sum_{\substack{p_1+p_2=p\\q_1+q_2=q}}h^{p_1,q_1}(X_1)\cdot h^{p_2,q_2}(X_2).
\]
\end{lemma}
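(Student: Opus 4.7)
The plan is to reduce this to the classical Künneth formula for coherent cohomology together with the well-known decomposition of the sheaf of $p$-forms on a product. First I would observe that if $\pi_i\colon X_1\times X_2\to X_i$ denotes the projection, then taking the $p$-th exterior power of the canonical decomposition $\Omega_X^1\cong \pi_1^*\Omega_{X_1}^1\oplus\pi_2^*\Omega_{X_2}^1$ (which holds because $X\to X_2$ is smooth with relative cotangent sheaf $\pi_1^*\Omega_{X_1}^1$, and similarly with the roles swapped) yields a natural isomorphism
\[
\Omega_X^p \;\cong\; \bigoplus_{p_1+p_2=p}\pi_1^*\Omega_{X_1}^{p_1}\otimes_{\OO_X}\pi_2^*\Omega_{X_2}^{p_2}
\;=\;\bigoplus_{p_1+p_2=p}\Omega_{X_1}^{p_1}\boxtimes\Omega_{X_2}^{p_2}.
\]

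Next I would compute $H^q(X,\Omega_X^p)$ by applying the Künneth formula for the external tensor product of coherent sheaves on smooth projective varieties over a field. For any coherent sheaves $\mathcal F_i$ on $X_i$ that are flat (automatic here, as we are over a field), one has
\[
H^q(X_1\times X_2,\mathcal F_1\boxtimes\mathcal F_2)\;\cong\;\bigoplus_{q_1+q_2=q}H^{q_1}(X_1,\mathcal F_1)\otimes_k H^{q_2}(X_2,\mathcal F_2),
\]
see e.g.\ \cite[Tag~0BEC]{stacks-project} or the analogous classical statement. Combining this with the decomposition of $\Omega_X^p$ and taking dimensions over $k$ gives exactly the claimed formula.

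The only mild subtlety is invoking Künneth in the coherent setting over a general field $k$; but since $X_1$ and $X_2$ are projective (hence the cohomology groups are finite-dimensional $k$-vector spaces) and everything is flat over $k$, the usual spectral-sequence or Čech-theoretic proof of Künneth applies verbatim, so there is no real obstacle. The main step is thus just the sheaf-level splitting of $\Omega_X^p$, after which the result is a purely formal consequence.
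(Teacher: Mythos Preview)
Your proof is correct and follows essentially the same route as the paper's own argument: decompose $\Omega_X^1$ as $\pi_1^*\Omega_{X_1}^1\oplus\pi_2^*\Omega_{X_2}^1$, take exterior powers to split $\Omega_X^p$, and then apply the classical K\"unneth formula for coherent sheaf cohomology. The only difference is that you spell out a bit more justification for the splitting and for why K\"unneth applies, which is fine but not essential.
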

\begin{proof}
We have $\Omega_X=\pi_1^*\Omega_{X_1}\oplus\pi_2^*\Omega_{X_2}$ and thus
\[ \Omega^p_X=\bigoplus_{p_1+p_2=p}\pi_1^*\Omega^{p_1}_{X_1}\otimes\pi_2^*\Omega^{p_2}_{X_2}. \]
Hence, using the classical K\"unneth formula for quasi-coherent sheaves we get
\begin{align*}
H^q(X,\Omega^p_X)
&= \bigoplus_{p_1+p_2=p}H^q\left(X,\pi_1^*\Omega^{p_1}_{X_1}\otimes\pi_2^*\Omega^{p_2}_{X_2}\right) \\
&= \bigoplus_{\substack{p_1+p_2=p\\q_1+q_2=q}}H^{q_1}\left(X_1,\Omega^{p_1}_{X_1}\right)\otimes H^{q_2}\left(X_2,\Omega^{p_2}_{X_2}\right).
\qedhere
\end{align*}
\end{proof}
\vskip-\lastskip
The next lemma provides a weak Lefschetz theorem for sufficiently ample hypersurfaces.
\begin{lemma}\label{lemma:lefschetz}
Let $X$ be a smooth projective $k$-variety of dimension~$n+1$ with a very ample line bundle
$\mathscr L = \OO_X(H)$. Let $d_0 \in \ZZ_{>0}$ such that $H^q(X,\Omega^p_X(-dH)) = 0$ when
$d \geq d_0$ and $p + q \leq n$. Then any smooth divisor $Y \in |dH|$ with $d \geq d_0$ satisfies
$h^{p,q}(Y)=h^{p,q}(X)$ when $p+q\le n-1$.
\end{lemma}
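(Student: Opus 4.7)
The plan is to use the two standard short exact sequences relating $\Omega_Y^p$ to $\Omega_X^p$: the restriction sequence
\[
0 \to \Omega_X^p(-dH) \to \Omega_X^p \to i_*\bigl(\Omega_X^p\big|_Y\bigr) \to 0
\]
(coming from twisting $0 \to \mathcal{O}_X(-dH) \to \mathcal{O}_X \to i_*\mathcal{O}_Y \to 0$ by $\Omega_X^p$, which is locally free) and the conormal sequence
\[
0 \to \Omega_X^{p-1}\big|_Y(-dH) \to \Omega_X^p\big|_Y \to \Omega_Y^p \to 0
\]
arising from the exact sequence $0 \to \mathcal{N}_{Y/X}^\vee \to \Omega_X\big|_Y \to \Omega_Y \to 0$ by passing to $p$-th exterior powers (and using $\mathcal{N}_{Y/X}^\vee \cong \mathcal{O}_Y(-dH)$). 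The goal is to compare $H^q(Y,\Omega_Y^p)$ first with $H^q(Y,\Omega_X^p|_Y)$, and then with $H^q(X,\Omega_X^p)$.

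For the first comparison, I would take the long exact sequence associated to the restriction sequence. In the range $p+q \leq n-1$ we have both $p+q \leq n$ and $p+(q+1) \leq n$, so the hypothesis gives $H^q(X,\Omega_X^p(-dH)) = H^{q+1}(X,\Omega_X^p(-dH)) = 0$, and hence an isomorphism $H^q(X,\Omega_X^p) \xrightarrow{\sim} H^q(Y,\Omega_X^p|_Y)$.

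For the second comparison, the long exact sequence of the conormal sequence reduces the problem to showing $H^q(Y,\Omega_X^{p-1}|_Y(-dH)) = H^{q+1}(Y,\Omega_X^{p-1}|_Y(-dH)) = 0$ in the same range. To compute these, I would apply another restriction sequence,
\[
0 \to \Omega_X^{p-1}(-2dH) \to \Omega_X^{p-1}(-dH) \to i_*\bigl(\Omega_X^{p-1}\big|_Y(-dH)\bigr) \to 0,
\]
so the desired vanishing follows if
\[
H^{j}\bigl(X,\Omega_X^{p-1}(-dH)\bigr) = H^{j+1}\bigl(X,\Omega_X^{p-1}(-2dH)\bigr) = 0 \quad \text{for } j = q,\,q+1.
\]
Since $d \geq d_0$ and $2d \geq d_0$, the hypothesis applies to both twists, and one checks that the required index inequalities, of the form $(p-1)+j' \leq n$, are all implied by $p+q \leq n-1$.

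Combining, for $p+q \leq n-1$ we obtain
\[
h^{p,q}(Y) = \dim H^q(Y,\Omega_Y^p) = \dim H^q(Y,\Omega_X^p|_Y) = \dim H^q(X,\Omega_X^p) = h^{p,q}(X).
\]
The argument is essentially routine; the only real obstacle is the bookkeeping of which of the four cohomology groups must vanish at which twist and degree, and verifying that each is covered by the hypothesis in the full range $p+q \leq n-1$ (with the tightest constraint coming from $H^{q+2}(X,\Omega_X^{p-1}(-2dH))$, which barely fits).
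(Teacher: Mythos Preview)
There is a genuine error in your conormal-sequence step. Taking $p$-th exterior powers of a short exact sequence $0 \to L \to E \to F \to 0$ of locally free sheaves with $L$ a line bundle yields
\[
0 \to L \otimes \textstyle\bigwedge^{p-1} F \to \textstyle\bigwedge^p E \to \textstyle\bigwedge^p F \to 0,
\]
so the kernel involves $\bigwedge^{p-1}$ of the \emph{quotient}, not of the middle term. Applied to the conormal sequence $0 \to \mathcal N_{Y/X}^\vee \to \Omega_X^1|_Y \to \Omega_Y^1 \to 0$, this gives
\[
0 \to \Omega_Y^{p-1}(-dH) \to \Omega_X^p\big|_Y \to \Omega_Y^p \to 0,
\]
not your sequence with $\Omega_X^{p-1}|_Y(-dH)$ on the left. (For $p=1$ the two happen to agree, which may explain the slip.)

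This matters: the vanishing you now need is that of $H^q(Y,\Omega_Y^{p-1}(-dH))$ and $H^{q+1}(Y,\Omega_Y^{p-1}(-dH))$, which concerns $\Omega_Y$, not $\Omega_X$, so your second restriction sequence on $X$ no longer applies directly. The paper's proof handles exactly this point by running an induction on $p$ for the stronger twisted statement $H^q(Y,\Omega_X^p(-eH)|_Y) \cong H^q(Y,\Omega_Y^p(-eH))$ for all $e \ge 0$; the inductive hypothesis then supplies the needed vanishing for $\Omega_Y^{p-1}$. Your overall architecture is the same as the paper's, and once you correct the kernel and insert the induction, the argument goes through along the lines you sketched.
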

\begin{proof}
The short exact sequence
\[
0 \to \Omega^p_X(-dH) \to \Omega^p_X \to \Omega^p_X\big|_Y \to 0
\]
shows that for all $p + q \leq n - 1$ and all $e \geq 0$, we have
\begin{equation}
H^q\Big(X, \Omega^p_X(-eH)\Big) = H^q\Big(Y, \Omega^p_X(-eH)\big|_Y\Big).\label{eq:restriction}
\end{equation}
We will prove by induction on $p$ that $H^q(Y, \Omega^p_X(-eH)|_Y) = H^q(Y, \Omega^p_Y(-eH))$
for all $e \geq 0$ and $p + q \leq n - 1$.
Together with \eqref{eq:restriction} this proves the result by taking $e = 0$.
The base case $p = 0$ is trivial since $\OO_X|_Y = \OO_Y$. For $p > 0$, the inductive hypothesis,
\eqref{eq:restriction}, and the assumption on $d_0$ imply
\begin{equation}
H^q\Big(Y, \Omega^i_Y(-eH)\Big) = H^q\Big(Y, \Omega^i_X(-eH)\big|_Y\Big)
= H^q\Big(X, \Omega^i_X(-eH)\Big) = 0 \label{eq:inherited vanishing}
\end{equation}
for $i + q \leq n - 1$, $e \geq d_0$, and $i < p$. The conormal sequence
\[
0 \to \OO_Y(-Y) \to \Omega^1_X\big|_Y \to \Omega^1_Y \to 0
\]
gives a short exact sequence
\begin{equation}
0 \to \Omega^{p-1}_Y(-Y) \to \Omega^p_X\big|_Y \to \Omega^p_Y \to 0\label{eq:exterior powers}
\end{equation}
since $\OO_Y(-Y)$ is a line bundle.
Now \eqref{eq:inherited vanishing} gives
\begin{equation*}
H^q\Big(Y,\Omega^{p-1}_Y(-Y-eH)\Big) = H^q\Big(Y,\Omega^{p-1}_Y\left(-(d+e)H\vphantom{\big(}\right)\Big) = 0
\end{equation*}
for $p + q \leq n$ and $e \geq 0$.
Thus, \eqref{eq:exterior powers} shows that the natural map
\[
H^q\Big(Y,\Omega^p_X(-eH)\big|_Y\Big) \to H^q\Big(Y,\Omega^p_Y(-eH)\Big)
\]
is an isomorphism for $p + q \leq n - 1$ and $e \geq 0$, as claimed.
\end{proof}
\begin{corollary}\label{corollary:lefschetz}
Let $X$ be a smooth projective $k$-variety of dimension~$n+1$ with a very ample line bundle $\mathscr L = \OO_X(H)$.
Then any smooth divisor $Y \in |dH|$ with $d \gg 0$ satisfies $h^{p,q}(Y)=h^{p,q}(X)$ when $p+q\le n-1$.
\end{corollary}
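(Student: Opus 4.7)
The plan is to invoke \autoref{lemma:lefschetz} directly and verify its hypothesis for $d$ sufficiently large. Concretely, I need to produce an integer $d_0\in\ZZ_{>0}$ such that $H^q(X,\Omega^p_X(-dH))=0$ for all $d\ge d_0$ and all $(p,q)$ with $p+q\le n$. Since there are only finitely many such pairs, it suffices to treat each one separately and then take the maximum of the resulting bounds.

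For a fixed pair $(p,q)$ with $p+q\le n$, I would first apply Serre duality on the smooth projective variety $X$ of dimension $n+1$. Using $(\Omega^p_X)^\vee\otimes\omega_X\cong\Omega^{n+1-p}_X$, Serre duality gives
\[
H^q\bigl(X,\Omega^p_X(-dH)\bigr) \;\cong\; H^{n+1-q}\bigl(X,\Omega^{n+1-p}_X(dH)\bigr)^\vee.
\]
Since $p+q\le n$, I have $n+1-q\ge 1$, so the right-hand side is a higher cohomology group of the coherent sheaf $\Omega^{n+1-p}_X$ twisted by a positive power of the very ample line bundle $\mathscr L$; by Serre vanishing it vanishes for all $d$ large enough.

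I do not expect any real obstacle here: Serre duality and Serre vanishing are standard on smooth projective $k$-varieties in arbitrary characteristic, and the only organisational point is that $d_0$ can be chosen uniformly because the index set $\{(p,q)\colon 0\le p,q,\ p+q\le n\}$ is finite. The corollary should therefore be an essentially immediate consequence of \autoref{lemma:lefschetz}.
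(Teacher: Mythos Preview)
Your proposal is correct and follows the same approach as the paper: invoke \autoref{lemma:lefschetz} after checking its hypothesis via Serre vanishing. The paper simply writes ``By Serre vanishing, there exists $d_0 \in \ZZ$ such that $H^q(X,\Omega^p_X(-dH)) = 0$ for all $d \geq d_0$ and $q \leq n$,'' leaving implicit the Serre duality step you spell out to reduce negative twists to the usual positive-twist statement; your version is just a more detailed unpacking of the same one-line argument.
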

\begin{proof}
By Serre vanishing, there exists $d_0 \in \ZZ$ such that $H^q(X,\Omega^p_X(-dH)) = 0$ for all $d \geq d_0$
and $q \leq n$. Then \autoref{lemma:lefschetz} gives the result.
\end{proof}
\begin{remark}
If $\operatorname{char} k = 0$, then by Nakano vanishing we may take $d_0 = 1$ in \autoref{lemma:lefschetz}.
This recovers the usual proof of weak Lefschetz from Nakano vanishing, although usually the implication
goes in the other direction. Similarly, if $\operatorname{char} k > 0$ and Nakano vanishing holds for $X$,
then we may take $d_0 = 1$, but in general Kodaira vanishing may already fail in positive characteristic \cite{raynaud}.
\end{remark}
For our application, it's useful to have some control over the Euler characteristic of~$\mathscr L^{-1}$.
\begin{lemma}\label{lemma:chi}
Let $X$ be a smooth projective $k$-variety of dimension $n+1$ and let $e\in\ZZ$.
Then, up to modifying $X$ by blowups in smooth centres that do not change its Hodge numbers modulo~$m$,
we may assume that $X$ admits a very ample line bundle $\mathscr L = \OO_X(H)$ such that
$\chi(X,\mathscr L^{-1})\equiv e\pmod m$ and such that any smooth divisor $Y \in |H|$ satisfies
$h^{p,q}(Y) = h^{p,q}(X)$ when $p + q \leq n - 1$.
\end{lemma}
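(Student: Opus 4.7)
The plan is to blow up $X$ at $m$ distinct closed points and then choose a carefully twisted power of a pullback line bundle. (For $n=0$ the Lefschetz condition is vacuous and the result follows from a direct degree argument on the curve $X$, so we focus on $n\ge 1$.)

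I would fix any very ample line bundle $\mathscr M$ on $X$, and let $\pi\colon\tilde X\to X$ be the blowup at $m$ distinct closed points with exceptional divisors $E_1,\ldots,E_m\cong\PP^n$. By \autoref{lemma:blowup}, each blowup of a point adds $1$ to $h^{p,p}$ for $1\le p\le n$ and leaves the remaining Hodge numbers unchanged, so the $m$-fold blowup does not change any Hodge number modulo $m$. For $j\in\{0,1,\ldots,m-1\}$ and $a\gg 0$ (uniformly in the finitely many $j$), the line bundle
\[
\mathscr B_j:=\pi^*\mathscr M^{\otimes a}\otimes\OO_{\tilde X}\!\left(-(n+1)\sum_{i=1}^{j}E_i\right)
\]
is very ample on $\tilde X$.

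The integer-valued polynomial $N\mapsto\binom{N(n+1)}{n+1}$ is periodic modulo $m$ and equals $1$ at $N=1$, so there are arbitrarily large $N$ with $\binom{N(n+1)}{n+1}\equiv 1\pmod m$. By \autoref{corollary:lefschetz} applied to each of the finitely many $\mathscr B_j$, I can pick such an $N$ large enough that for every $j$ any smooth divisor in $|\mathscr B_j^{\otimes N}|$ has the same Hodge numbers as $\tilde X$ in bidegrees with $p+q\le n-1$. Set $\mathscr L_j:=\mathscr B_j^{\otimes N}$.

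Iterating the short exact sequences $0\to\OO(cE_i)\to\OO((c+1)E_i)\to\OO_{E_i}(-(c+1))\to 0$ over the disjoint $E_i$'s, and using $\chi(\PP^n,\OO(-s))=(-1)^n\binom{s-1}{n}$ for $s\ge n+1$ (zero for $1\le s\le n$) together with the hockey-stick identity, one computes
\[
\chi(\tilde X,\mathscr L_j^{-1})=\chi(X,\mathscr M^{-aN})+(-1)^n j\binom{N(n+1)}{n+1}\equiv\chi(X,\mathscr M^{-aN})+(-1)^n j\pmod m.
\]
Since $(-1)^n$ is a unit modulo $m$ and $j$ ranges over the complete residue system $\{0,1,\ldots,m-1\}$, some $j$ makes the right-hand side $\equiv e\pmod m$. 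Renaming $\tilde X$ as $X$ and setting $\mathscr L:=\mathscr L_j$ proves the lemma.

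The main obstacle is the tension between the requirement that $N$ be large (to apply \autoref{corollary:lefschetz}) and the desire to keep the scaling factor $\binom{N(n+1)}{n+1}$ congruent to $1$ modulo $m$; this is resolved by the periodicity of this integer-valued polynomial modulo $m$. The uniformity of $a$ and $N$ in $j$ is immediate because only finitely many values of $j$ are involved.
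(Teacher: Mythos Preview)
Your overall strategy---blow up $m$ points, twist by exceptional divisors to adjust $\chi$, and take a high power to invoke weak Lefschetz---matches the paper's. However, there is a genuine gap: your line bundles $\mathscr B_j = \pi^*\mathscr M^{\otimes a}\otimes\OO_{\tilde X}\bigl(-(n+1)\sum_{i=1}^j E_i\bigr)$ are \emph{not} very ample on $\tilde X$ when $j<m$. For any $i>j$ the restriction $\mathscr B_j|_{E_i}$ is trivial, since $\pi^*\mathscr M|_{E_i}\cong\OO_{E_i}$ (it is pulled back through a point) and $E_i$ is disjoint from $E_1,\ldots,E_j$. So $\mathscr B_j$ is not even ample, and \autoref{corollary:lefschetz} does not apply.

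The repair is straightforward: you must subtract at least one copy of \emph{every} $E_i$ (and then rerun the $\chi$ computation), or, more cleanly, start with an ample line bundle on $\tilde X$ rather than a pullback from $X$. The paper does the latter, setting $\mathscr L=\mathscr M^{\otimes a}\otimes\OO_{\tilde X}(E_{\le r})$ for $\mathscr M$ ample on $\tilde X$ and $r\in\{0,\ldots,m-1\}$. It also sidesteps your binomial periodicity argument: choosing $a$ divisible by $m$ times the denominators of the Hilbert polynomial $t\mapsto\chi(\tilde X,\mathscr M^{\otimes t}\otimes\OO_{\tilde X}(-E_{\le r}))$ forces $\chi(\tilde X,\mathscr L^{-1})\equiv\chi(\tilde X,\OO_{\tilde X}(-E_{\le r}))=\chi(X,\OO_X)-r\pmod m$ directly, and then $r$ is chosen to hit $e$.
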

\begin{proof}
Let $\pi \colon \tilde X \to X$ be a blowup in $m$ distinct points $p_1,\ldots,p_m \in X$.
Then the blowup formula for Hodge numbers (\autoref{lemma:blowup}) gives
$h^{p,q}(\tilde X) \equiv h^{p,q}(X) \pmod m$.
Let $E_i = \pi^{-1}(p_i)$ be the exceptional divisors, and for $r \in \{0,\ldots,m\}$ write
$E_{\leq r} = E_1 + \ldots + E_r$. Then the short exact sequence
\[
0 \to \OO_{\tilde X}(- E_{\leq r}) \to \OO_{\tilde X} \to \OO_{E_{\leq r}} \to 0
\]
shows that
\[
\chi\big(\tilde X, \OO_{\tilde X}(-E_{\leq r})\big) = \chi(\tilde X, \OO_{\tilde X}) - \sum_{i=1}^r \chi(E_i, \OO_{E_i}) = \chi(X, \OO_X) - r.
\]
Take $r \in \{0,\ldots,m-1\}$ with $r \equiv \chi(X,\OO_X) - e \pmod m$.

Let $\mathscr M$ be an ample line bundle on $\tilde X$.
By Serre vanishing there exists $a_0 \in \ZZ$ such that for all $a \geq a_0$, the line bundle
$\mathscr L = \mathscr M^{\otimes a} \otimes \OO_{\tilde X}(E_{\leq r})$ is very ample and satisfies
\begin{equation}
H^q\big(X,\Omega^p_X\otimes \mathscr L^{-d}\big) = 0\label{eq:nakano}
\end{equation}
for $d > 0$ and $q \leq n$. Taking $a$ divisible by the product of $m$ and the denominators of the coefficients
of the Hilbert polynomial $P(t) = \chi(\tilde X,\mathscr M^{\otimes t} \otimes \OO_{\tilde X}(-E_{\leq r}))$,
we see that
\[
\chi\big(\tilde X, \mathscr L^{-1}\big) \equiv \chi\big(\tilde X, \OO_{\tilde X}(-E_{\leq r})\big) \equiv e \pmod m.
\]
Finally, $\mathscr L$ satisfies weak Lefschetz by \eqref{eq:nakano} and \autoref{lemma:lefschetz}.
\end{proof}

\section{Outer Hodge numbers}\label{section:outer}

In this section, we solve the construction problem for the outer Hodge numbers. Because of Serre duality
and the fact that $h^{0,0}=1$, it suffices to consider the Hodge numbers $h^{p,q}$ with $(p,q)\in J_n$,
where
\[
J_n = \{(1,0),\ldots,(n,0),(0,1),\ldots,(0,n)\} .
\]
The main result of this section is the following:
\begin{proposition}\label{proposition:outer}
Let $n\ge0$.
For any given integers $a^{1,0},\ldots,a^{n,0}$ and $a^{0,1},\ldots,a^{0,n}$ with $a^{n,0}=a^{0,n}$,
there exists a smooth projective $k$-variety $X$ of dimension~$n$ such that
\[
h^{p,q}(X)\equiv a^{p,q}\pmod m
\]
for all $(p,q) \in J_n$.
\end{proposition}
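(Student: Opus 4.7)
I would proceed by induction on $n$. The base cases $n=0$ (take a point) and $n=1$ (take a smooth projective curve of genus $\equiv a^{1,0}\pmod m$) are immediate, so the real work is at the inductive step for $n\ge 2$. The plan has three phases: first, construct an $(n+1)$-dimensional auxiliary variety $Y$ that already carries the correct residues $a^{p,0}$ and $a^{0,q}$ for $1\le p,q\le n-1$; second, descend to a smooth hyperplane section $X\subset Y$ of dimension $n$ that inherits these residues via weak Lefschetz; third, adjust the top outer Hodge numbers $h^{n,0}(X)=h^{0,n}(X)$ (equal by Serre duality on the $n$-fold $X$) to match $a^{n,0}=a^{0,n}$ by controlling the Euler characteristic via \autoref{lemma:chi}.

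For the construction of $Y$, I would assemble it as a product of smooth building blocks whose outer Hodge numbers are easy to control: smooth projective curves $C_g$ of genus $g$ (contributing $g$ symmetrically to $h^{1,0}=h^{0,1}$); copies of Serre's surface $S$ from \autoref{theorem:surface} (contributing $0$ to $h^{1,0}$ and $1$ to $h^{0,1}$, and providing the only source of Hodge asymmetry in the construction); and projective spaces $\PP^r$ for dimensional padding (contributing trivially to outer Hodge numbers). The K\"unneth formula (\autoref{lemma:kuenneth}) translates such products into additive and multiplicative combinations of lower Hodge numbers, and the inductive hypothesis, applied in smaller dimensions, organises the bookkeeping of the cross-terms that propagate the degree-$1$ asymmetry of $S$ to arbitrary degrees via multiplication by curves of chosen genera.

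For the descent I would apply \autoref{lemma:chi} to $Y$: after blowing up $Y$ in $m$ distinct smooth points (which preserves all Hodge numbers modulo $m$), we may equip $Y$ with a very ample line bundle $\mathscr L=\OO_Y(H)$ satisfying $\chi(Y,\mathscr L^{-1})\equiv e\pmod m$ for any chosen integer $e$, and such that any smooth divisor $X\in|H|$ satisfies the weak Lefschetz conclusion of \autoref{lemma:lefschetz}. Then $h^{p,0}(X)=h^{p,0}(Y)\equiv a^{p,0}$ and $h^{0,q}(X)=h^{0,q}(Y)\equiv a^{0,q}\pmod m$ for all $p,q\le n-1$. The short exact sequence $0\to\OO_Y(-H)\to\OO_Y\to\OO_X\to 0$ gives $\chi(X,\OO_X)=\chi(Y,\OO_Y)-\chi(Y,\mathscr L^{-1})$; since all $h^{0,q}(X)$ with $q<n$ are already pinned modulo $m$, $h^{0,n}(X)$ is determined modulo $m$ by $\chi(X,\OO_X)$, and hence by $e$. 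Choosing $e$ appropriately makes $h^{0,n}(X)\equiv a^{0,n}\pmod m$, and Serre duality then delivers $h^{n,0}(X)\equiv a^{n,0}\pmod m$ as well.

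The main obstacle will be the construction of $Y$ in the inductive step: verifying that the simple building blocks (curves, copies of $S$, and projective spaces) can be combined to realise arbitrary asymmetric residues in all outer Hodge positions $1\le p,q\le n-1$ simultaneously. The K\"unneth cross-terms must be tracked carefully so that multiplying by a curve or by $S$ modifies the desired residues without destroying the others, and one has to use the inductive hypothesis in a sufficiently flexible form to gain control over both the $h^{p,0}$ and the $h^{0,q}$ columns at once. This is the most delicate step and the one where the absence of Hodge symmetry in positive characteristic introduces genuine extra work compared to the characteristic-zero argument of \cite{paulsen-schreieder}.
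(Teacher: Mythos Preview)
Your overall architecture is exactly the paper's: the proposition is obtained from \autoref{lemma:outer-mid} (your phases~2--3: pad to dimension $\geq n+1$, cut by weak Lefschetz, and control $h^{0,n}$ through $\chi(\mathscr L^{-1})$ via \autoref{lemma:chi}) together with \autoref{lemma:outer-low} (your phase~1). The descent and the Euler-characteristic argument you describe are precisely what the paper does, essentially verbatim.

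The gap is in phase~1, which you rightly flag as the obstacle but do not resolve. Curves and projective spaces contribute symmetrically, and a single factor of Serre's surface $S$ only separates $h^{0,1}$ from $h^{1,0}$; your suggestion of ``propagating the degree-$1$ asymmetry of $S$ to arbitrary degrees via multiplication by curves'' does move $h^{0,r}-h^{r,0}$ for higher $r$, but it simultaneously moves many other outer positions, and you give no mechanism to isolate a change at $(0,n-1)$ while fixing $(n-1,0)$ and the rest of $J_{n-1}$. The paper manufactures such an ``asymmetry unit'' from the inductive hypothesis itself: using the proposition in dimension $n-1$, build an $(n-1)$-fold $Y$ with $h^{0,q}(Y)\equiv(-1)^q$ for $q<n-1$, $h^{0,n-1}(Y)\equiv 0$, and $h^{p,0}(Y)\equiv 0$ for $p>0$; using the proposition in dimension $2$, build a surface $S'$ with $h^{0,1}(S')\equiv 1$ and $h^{1,0}(S')\equiv h^{0,2}(S')\equiv h^{2,0}(S')\equiv 0$. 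Then K\"unneth forces every outer Hodge number of $S'\times Y$ in $J_{n-1}$ to vanish modulo $m$ except $h^{0,n-1}(S'\times Y)\equiv(-1)^n$. Taking $Z\times(S'\times Y)^i$, where $Z$ is an inductively constructed $(n-1)$-fold with $h^{p,q}(Z)\equiv a^{p,q}$ on $J_{n-1}\setminus\{(0,n-1)\}$ and (forced by Serre duality in dimension $n-1$) $h^{0,n-1}(Z)\equiv a^{n-1,0}$, and choosing $i\equiv(-1)^n(a^{0,n-1}-a^{n-1,0})$, corrects $h^{0,n-1}$ to $a^{0,n-1}$ without disturbing anything else. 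This alternating-sign trick is the idea your sketch is missing; without it, the construction of the auxiliary variety in phase~1 does not go through.
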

The construction will be carried out by induction on the dimension,
using the weak Lefschetz results from \autoref{corollary:lefschetz} and \autoref{lemma:chi}.
\begin{lemma}\label{lemma:outer-mid}
Let $n,d\geq 0$ be integers such that $d \geq n-1$.
If \autoref{proposition:outer} holds in dimension~$d$ for
$a^{1,0},\ldots,a^{d,0}$ and $a^{0,1},\ldots,a^{0,d}$ with $a^{d,0}=a^{0,d}$,
then it also holds in dimension~$n$ for $a^{1,0},\ldots,a^{n-1,0},b$ and $a^{0,1},\ldots,a^{0,n-1},b$
for any $b \in \ZZ$.
\end{lemma}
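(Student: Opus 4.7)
The plan is to construct from the hypothetical $d$-dimensional variety $Y$ (supplied by \autoref{proposition:outer} in dimension $d$) an auxiliary smooth projective $(n+1)$-dimensional variety $V$ whose outer Hodge numbers match those of $Y$ modulo~$m$, and then cut $V$ by a carefully chosen very ample divisor so that the resulting $n$-dimensional $X$ inherits these outer Hodge numbers from weak Lefschetz and additionally satisfies $h^{n,0}(X)\equiv b\pmod m$.

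First I would build $V$, splitting into two cases according to whether $d\le n$ or $d\ge n+1$. If $d\le n$, set $V=Y\times\PP^{n-d+1}$; then \autoref{lemma:kuenneth} combined with the vanishing of $h^{r,0}(\PP^{\ell})=h^{0,r}(\PP^{\ell})$ for $r>0$ gives $h^{p,0}(V)=h^{p,0}(Y)$ and $h^{0,q}(V)=h^{0,q}(Y)$ for all $p,q$. If instead $d\ge n+1$, successively cut $Y$ by $d-n-1$ smooth very ample hypersurfaces of sufficiently high degree, applying \autoref{corollary:lefschetz} at each step to preserve the Hodge numbers in the relevant range. In either case $V$ is a smooth projective variety of dimension $n+1$ with $h^{p,0}(V)\equiv a^{p,0}$ and $h^{0,q}(V)\equiv a^{0,q}\pmod m$ for $1\le p,q\le n-1$.

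Next I would apply \autoref{lemma:chi} to $V$ with a parameter $e\in\ZZ$ to be determined: after replacing $V$ by a blowup that leaves all Hodge numbers unchanged modulo~$m$, this yields a very ample line bundle $\mathscr L=\OO_V(H)$ with $\chi(V,\mathscr L^{-1})\equiv e\pmod m$ such that every smooth $X\in|H|$ satisfies $h^{p,q}(X)=h^{p,q}(V)$ for $p+q\le n-1$. Choosing such an $X$ by Bertini gives $h^{p,0}(X)\equiv a^{p,0}$ and $h^{0,q}(X)\equiv a^{0,q}\pmod m$ for $1\le p,q\le n-1$.

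It remains to tune $h^{n,0}(X)=h^{0,n}(X)$ (by Serre duality). The short exact sequence $0\to\mathscr L^{-1}\to\OO_V\to\OO_X\to 0$ gives $\chi(X,\OO_X)\equiv\chi(V,\OO_V)-e\pmod m$, and the identity $\chi(X,\OO_X)=\sum_{q=0}^n(-1)^q h^{0,q}(X)$ together with the already-controlled values of $h^{0,q}(X)$ for $q<n$ expresses $h^{0,n}(X)\pmod m$ as an affine-linear function of $e$ with unit coefficient. I can therefore solve for $e$ to make $h^{0,n}(X)\equiv b\pmod m$. The hard part is essentially absorbed into \autoref{lemma:chi}; the only mild subtlety is in the case $d\ge n+1$, where I must verify that weak Lefschetz can be invoked at every intermediate cut, which amounts to choosing each successive hypersurface degree sufficiently large.
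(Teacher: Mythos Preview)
Your proposal is correct and follows essentially the same route as the paper: pass to an $(n+1)$-fold by taking a product with projective space or cutting by high-degree hypersurfaces via \autoref{corollary:lefschetz}, then invoke \autoref{lemma:chi} and the ideal-sheaf exact sequence to fix $h^{0,n}$ modulo~$m$. The only cosmetic difference is that the paper always multiplies by $\PP^2$ (possibly repeatedly) rather than $\PP^{n-d+1}$, and it writes out the explicit value $e \equiv (-1)^n(a^{0,n}-a^{0,n+1}-b)$ rather than leaving it as ``solve for $e$''.
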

\begin{proof}
Let $X$ be a smooth projective $k$-variety of dimension~$d$ with the given Hodge numbers $a^{p,q}$.
We may assume that $d\ge n+1$ by multiplying $X$ with $\PP^2$,
which does not change its outer Hodge numbers in degree $\le n-1$.
By repeatedly replacing $X$ by a smooth hyperplane section of sufficiently high degree,
we may further assume that $d = n + 1$ by \autoref{corollary:lefschetz}.
By \autoref{lemma:chi}, after possibly replacing $X$ by a blowup that does not change
its Hodge numbers modulo $m$, there exists a very ample line bundle $\mathscr L$ on $X$ such that
\begin{equation}
\chi(X,\mathscr L^{-1}) \equiv (-1)^n(a^{0,n}-a^{0,n+1}-b) \pmod m\label{eq:euler characteristic}
\end{equation}
and such that a smooth section $Y$ of $\mathscr L$ satisfies $h^{p,q}(Y)\equiv a^{p,q}\pmod m$ for $p+q\le n-1$.
The short exact sequence
\[
0 \to \mathscr L^{-1} \to \OO_X \to \OO_Y \to 0
\]
gives $\chi(X,\mathscr L^{-1}) = \chi(X,\OO_X) - \chi(Y,\OO_Y)$.
Since $h^{0,q}(X) = h^{0,q}(Y)$ for $q \leq n-1$, we conclude that
\begin{align*}
\chi(X,\mathscr L^{-1}) &= (-1)^n h^{0,n}(X) + (-1)^{n+1} h^{0,n+1}(X) - (-1)^n h^{0,n}(Y)\\
&\equiv (-1)^n \left(a^{0,n} - a^{0,n+1} - h^{0,n}(Y)\right). \pmod m
\end{align*}
With \eqref{eq:euler characteristic} we get $h^{0,n}(Y) \equiv b \pmod m$,
so Serre duality gives $h^{n,0}(Y) \equiv b \pmod m$.
\end{proof}
\vskip-\lastskip
Note that in characteristic zero, \autoref{lemma:outer-mid} immediately implies \autoref{proposition:outer},
giving an alternative approach to a variant of \cite[Prop.~4]{paulsen-schreieder}.
In positive characteristic, however, the failure of Hodge symmetry raises new difficulties,
since e.\,g.\ $h^{n-1,0}=h^{0,n-1}$ is true for varieties of dimension~$n-1$
but not for all varieties of dimension~$n$.
This problem is solved in the following construction,
which together with \autoref{lemma:outer-mid} implies \autoref{proposition:outer}.
\begin{lemma}\label{lemma:outer-low}
Let $n\ge2$. For any given integers $a^{0,1},\ldots,a^{0,n-1}$ and $a^{1,0},\ldots,a^{n-1,0}$,
there exists a smooth projective $k$-variety $X$ of dimension $\ge n-1$ such that
\[
h^{p,q}(X)\equiv a^{p,q}\pmod m
\]
for all $(p,q)\in J_{n-1}$.
\end{lemma}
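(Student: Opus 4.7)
I proceed by induction on $n \ge 2$.

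For the base case $n = 2$, take $X := C \times S^e$, where $C$ is a smooth projective curve of genus $g \in \ZZ_{\ge 0}$ with $g \equiv a^{1,0} \pmod m$, $S$ is Serre's surface from \autoref{theorem:surface}, and $e \in \ZZ_{\ge 0}$ satisfies $e \equiv a^{0,1} - a^{1,0} \pmod m$. Then $\dim X = 1 + 2e \ge 1 = n - 1$, and \autoref{lemma:kuenneth} gives $h^{1,0}(X) = g + e \cdot h^{1,0}(S) = g \equiv a^{1,0}$ and $h^{0,1}(X) = g + e \cdot h^{0,1}(S) = g + e \equiv a^{0,1} \pmod m$.

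For the inductive step, assume the lemma for $n - 1 \ge 2$. Applying the inductive hypothesis yields a smooth projective variety $Y$ of dimension $\ge n - 2$ with $h^{p,0}(Y) \equiv a^{p,0}$ and $h^{0,q}(Y) \equiv a^{0,q} \pmod m$ for $1 \le p, q \le n - 2$. The plan is to set $X := Y \times Z$, where $Z$ is an auxiliary variety constructed so that
\begin{enumerate}
\item $h^{p,0}(Z), h^{0,p}(Z) \equiv 0 \pmod m$ for $1 \le p \le n - 2$;
\item $h^{n-1,0}(Z) \equiv a^{n-1,0} - h^{n-1,0}(Y)$ and $h^{0,n-1}(Z) \equiv a^{0,n-1} - h^{0,n-1}(Y) \pmod m$.
\end{enumerate}
Given such a $Z$, \autoref{lemma:kuenneth} gives $h^{p,0}(X) \equiv h^{p,0}(Y)$ and $h^{0,q}(X) \equiv h^{0,q}(Y) \pmod m$ for $1 \le p, q \le n - 2$, because condition~(1) kills all cross terms modulo $m$, and the surviving boundary terms combined with~(2) yield $h^{n-1,0}(X) \equiv a^{n-1,0}$ and $h^{0,n-1}(X) \equiv a^{0,n-1} \pmod m$.

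The construction of $Z$ is the technical core of the inductive step. I take $Z$ to be a product of three types of factors: (a)~a smooth hypersurface $H \subseteq \PP^n$ of dimension $n - 1$ and sufficiently high degree, whose outer Hodge numbers vanish for $p + q \le n - 2$ by weak Lefschetz (\autoref{corollary:lefschetz}) and whose geometric genus contributes the symmetric part of $h^{n-1,0}(Z)$ and $h^{0,n-1}(Z)$; (b)~an appropriately chosen power $S^e$ of Serre's surface, providing the asymmetry between $h^{n-1,0}(Z)$ and $h^{0,n-1}(Z)$; and (c)~correction factors built from products of curves and Hodge-symmetric building blocks, chosen to annihilate the residual contributions to lower-degree outer Hodge numbers modulo~$m$. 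The main obstacle will be the combinatorial problem of simultaneously satisfying conditions~(1) and~(2): since the coefficients of $(1 + x + s x^2)^e$ modulo~$m$ depend nonlinearly on~$e$ (with $s = h^{2,0}(S)$), a single power of Serre's surface does not give independent control over the asymmetric contributions at every degree, and resolving this calls for an iterative choice of parameters, possibly supplemented by \autoref{lemma:chi}-style Euler characteristic adjustments via hyperplane sections (in the spirit of \autoref{lemma:outer-mid}), in order to decouple the degree-$(n-1)$ asymmetry from the vanishing conditions at lower degrees.
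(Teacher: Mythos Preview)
Your base case $n=2$ is correct and essentially the same as the paper's (a curve of genus~$g$ plays the role of $E^j$).

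The inductive step, however, is not a proof: you reduce to constructing an auxiliary variety $Z$ satisfying conditions~(1) and~(2), and then openly state that this construction is an unresolved ``obstacle'' to be handled by unspecified ``correction factors'' and ``iterative choice of parameters''. Concretely, the difficulty is real. A high-degree hypersurface $H\subseteq\PP^n$ gives you~(1) and a \emph{symmetric} contribution at degree~$n-1$, but multiplying by $S^e$ destroys~(1): by K\"unneth, $h^{0,q}(H\times S^e)\equiv h^{0,q}(S^e)$ for $1\le q\le n-2$, and these are the coefficients of $(1+x)^e$ modulo~$m$, which are not all zero. So you would need further factors to cancel these, and you have not exhibited them. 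In effect, constructing $Z$ with arbitrary asymmetric $h^{n-1,0},h^{0,n-1}$ and vanishing lower outer Hodge numbers is a special case of the very lemma you are proving, and your inductive hypothesis (only \autoref{lemma:outer-low} in dimension~$n-1$) is not visibly strong enough to produce it.

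The paper closes this gap by running a \emph{joint} induction with \autoref{lemma:outer-mid}, so that the inductive hypothesis is \autoref{proposition:outer} in all dimensions $\le n-1$. This allows one to build, in dimension~$n-1$, a variety $Y$ with $h^{0,q}(Y)\equiv(-1)^q$ for $0\le q<n-1$, $h^{0,n-1}(Y)\equiv0$, and $h^{p,0}(Y)\equiv0$ for $p>0$, and a surface $S$ with $h^{0,1}(S)\equiv1$ and all other nontrivial outer Hodge numbers $\equiv0$. Then $S\times Y$ has all outer Hodge numbers in $J_{n-1}$ congruent to~$0$ except $h^{0,n-1}(S\times Y)\equiv(-1)^n$: the alternating signs in $Y$ telescope against the shift by $h^{0,1}(S)$. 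This single building block is a ``pure asymmetry generator'' at degree~$n-1$, and taking $X=Z'\times(S\times Y)^i$ with a suitable \emph{symmetric} $Z'$ (again from \autoref{proposition:outer} in dimension~$n-1$) finishes the job. The key idea you are missing is this telescoping trick together with the strengthened inductive hypothesis that makes it available.
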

Note that we do not assume $a^{0,n-1}=a^{n-1,0}$ here, so we typically need $\dim X \geq n$.
\begin{proof}[Proof of \autoref{lemma:outer-low}]
First consider the case $n=2$.
Let $E$ be an elliptic curve and let~$S$ be the surface from \autoref{theorem:surface}.
Choose $i\ge0$ and $j\ge1$ with $i\equiv a^{0,1}-a^{1,0}\pmod m$ and $j\equiv a^{1,0}\pmod m$, and
set $X=S^i\times E^j$. Then it follows from K\"unneth's formula (\autoref{lemma:kuenneth}) that
$h^{0,1}(X)\equiv i+j\equiv a^{0,1}\pmod m$ and $h^{1,0}(X)\equiv j\equiv a^{1,0}\pmod m$.

Now assume $n \geq 3$.
By \autoref{lemma:outer-mid}, we may assume inductively that
\autoref{proposition:outer} holds in dimensions $\leq n-1$.
Therefore, there exists a smooth projective variety $Y$ of dimension $n-1$ with outer Hodge numbers
\[
h^{p,q}(Y) \equiv \begin{cases}
(-1)^q, & p = 0,\ 0 \leq q < n-1,\\
0, & p = 0,\ q = n - 1,\\
0, & p > 0,\ q = 0.
\end{cases} \pmod m.
\]
By \autoref{proposition:outer} in dimension~$2$, there exists a smooth projective surface $S$ with outer Hodge numbers
$h^{1,0}(S)\equiv h^{2,0}(S)\equiv h^{0,2}(S)\equiv 0\pmod m$ and $h^{0,1}(S)\equiv 1\pmod m$.
The K\"unneth formula from \autoref{lemma:kuenneth} shows that $S \times Y$ has outer Hodge
numbers $h^{p,q}(S \times Y) \equiv 0\pmod m$ for $(p,q) \in J_{n-1}$,
except $h^{0,0}(S \times Y) = 1$ and $h^{0,n-1}(S \times Y) \equiv (-1)^n \pmod m$.

Finally, by \autoref{proposition:outer} in dimension $n-1$, there exists a smooth projective variety $Z$
with outer Hodge numbers given by
\[
h^{p,q}(Z) \equiv \begin{cases}
a^{p,q}, & (p,q) \in J_{n-1} \setminus\{(0,n-1)\}, \\
a^{n-1,0}, & (p,q) = (0,n-1).
\end{cases} \pmod m
\]
Taking $X = Z\times(S\times Y)^i$ for $i \geq 0$ gives outer Hodge numbers
\[
h^{p,q}(X) \equiv \begin{cases}
a^{p,q}, & (p,q) \in J_{n-1} \setminus\{(0,n-1)\}, \\
a^{n-1,0} + (-1)^n i, & (p,q) = (0,n-1).
\end{cases} \pmod m
\]
The result follows by taking $i \equiv (-1)^n(a^{0,n-1}-a^{n-1,0}) \pmod m$.
\end{proof}

\section{Inner Hodge numbers}\label{section:inner}

The aim of this section is to prove \autoref{theorem:birational}, i.\,e.\ to modify the inner Hodge numbers
of a smooth projective $k$-variety via successive blowups.
We first show how to produce certain subvarieties with asymmetric Hodge numbers that we will blow up later.
\begin{lemma}\label{lemma:asymmetric subvarieties}
Let $X$ be a smooth projective $k$-variety of dimension~$n$,
let $b,c\in\ZZ$, and let $d \in \{2,\ldots,n-2\}$. Then there exists a smooth projective variety $\tilde X$
and a birational morphism $\tilde X \to X$ obtained as a composition of blowups in smooth centres
that does not change the Hodge numbers modulo~$m$ such that
$\tilde X$ contains a smooth subvariety $W$ of dimension~$d$ satisfying
\begin{equation}
h^{d,0}(W) = h^{0,d}(W) \equiv 0 \pmod m \label{eq:W-zero}
\end{equation}
and
\begin{equation}
h^{d-1,0}(W)\equiv b, \quad h^{0,d-1}(W)\equiv c \pmod m. \label{eq:W-bc}
\end{equation}
\end{lemma}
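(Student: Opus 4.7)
The plan is to first use \autoref{proposition:outer} in dimension $d$ to construct the abstract variety $W_0$, then to realize it as a smooth closed subvariety of a suitable blowup $\tilde X \to X$. For the first step, apply \autoref{proposition:outer} with the outer Hodge data $a^{d-1,0}=b$, $a^{0,d-1}=c$, $a^{d,0}=a^{0,d}=0$, and the remaining outer Hodge numbers set to zero; the Serre duality constraint $a^{d,0}=a^{0,d}$ is satisfied, so this yields a smooth projective $d$-fold $W_0$ whose outer Hodge numbers satisfy \eqref{eq:W-zero} and \eqref{eq:W-bc}.

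The substantive task is the second step: embedding $W_0$ as a smooth closed subvariety of some $\tilde X$ obtained from $X$ by blowups in smooth centres. The na\"ive idea --- blow up a point of $X$ to produce an exceptional $\PP^{n-1}$, and place $W_0$ inside it --- only works when $W_0$ happens to embed in $\PP^{n-1}$. In general one builds a more elaborate exceptional locus by successively blowing up along smooth subvarieties of the previous exceptional divisors (and their strict transforms), and then invokes Maruyama's elementary transformations of vector bundles, recalled in the Appendix. Elementary transformations exchange one projective bundle $\PP(\mathscr{E})$ for a birationally equivalent $\PP(\mathscr{E}')$ via a blowup--blowdown along smooth centres, providing exactly the flexibility needed to fit a copy of $W_0$ into the exceptional geometry of some $\tilde X$.

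Throughout, the effect on Hodge numbers is controlled via \autoref{lemma:blowup}: each blowup in a smooth centre shifts Hodge numbers by a combination of those of the centre, and performing it $m$ times on $m$ disjoint translates yields no net change modulo $m$; the same trick absorbs the blowup--blowdown pairs coming from elementary transformations. I expect the main obstacle to be the combinatorial bookkeeping needed to assemble these transformations so that $W_0$ genuinely appears as a smooth subvariety, while staying within the category of blowups in smooth centres and not appealing to embedded resolution of singularities, which may fail in positive characteristic (cf.\ \autoref{remark:resolution} for the shorter argument that uses it).
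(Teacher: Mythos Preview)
Your proposal has the right ingredients but is missing the key structural idea, and as written it does not avoid embedded resolution in dimension~$d$. You construct $W_0$ as an arbitrary $d$-dimensional variety via \autoref{proposition:outer}, and then propose to ``fit a copy of $W_0$ into the exceptional geometry of some $\tilde X$'' using elementary transformations. But Maruyama's elementary transformations (in the form used here, \autoref{prop:elementary}) only compare projective bundles over a base of dimension at most~$2$; they give no mechanism for embedding an arbitrary smooth $d$-fold into a blowup of $X$. Absent a concrete construction, your second step amounts to the assertion that a birational model of $W_0$ inside some exceptional $\PP^{n-1}$ can be desingularised by further smooth blowups of the ambient variety---which is precisely embedded resolution in dimension~$d$, unknown for $d \ge 4$ in positive characteristic.

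The paper's proof sidesteps this by \emph{not} building $W$ abstractly in dimension~$d$. Instead it pushes all the Hodge asymmetry down to a \emph{surface}~$S$ with $h^{1,0}(S)\equiv b$, $h^{0,1}(S)\equiv c$, $h^{2,0}(S)\equiv 0\pmod m$, and takes $W$ birational to $S \times Z$ where $Z \subseteq \PP^{d-1}$ is a smooth degree-$d$ hypersurface (so $h^{d-2,0}(Z)=1$, and K\"unneth gives the required outer Hodge numbers of $W$). The point is that embedded resolution \emph{is} known for surfaces: one places a singular birational model of $S$ inside an exceptional $\PP^3 \subseteq \PP^{n-1}$ and resolves it by smooth blowups. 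Blowing up along the resulting smooth surface $S$ yields a $\PP^{n-3}$-bundle $\PP_S(\mathscr E)$ as exceptional divisor, and now \autoref{prop:elementary} applies (since $\dim S = 2$) to pass, via a further smooth blowup, to the trivial bundle $S \times \PP^{n-3}$, inside which $S \times Z$ sits. The strict transform of $S \times Z$ is the desired $W$. This reduction to surfaces is the missing idea in your outline.
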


\begin{proof}
Let $X_1 \to X$ be the blowup of $X$ in a point.
The assumption on $d$ implies $n \geq 4$,
so the exceptional divisor of $X_1$ contains $\PP^3$.
By \autoref{proposition:outer}, there exists a smooth projective surface $S_0$ such that
$h^{2,0}(S_0)=h^{0,2}(S_0)\equiv0\pmod m$ and
\[
h^{1,0}(S_0)\equiv b, \quad h^{0,1}(S_0)\equiv c \pmod m.
\]
Choose a possibly singular surface $S_1 \subseteq \PP^3$ birational to $S_0$.
By embedded resolution of surfaces \cite[Thm.~9.1.3]{abhyankar} (see also \cite[Thm.~1.2]{cutkosky}),
there exists a birational morphism $X_2 \to X_1$ obtained as a composition of blowups in smooth centres
contained in $\PP^3$ such that the strict transform $S$ of $S_1$ is smooth.
Since $S$ is also birational to $S_0$, we have
$h^{2,0}(S)=h^{0,2}(S)\equiv0\pmod m$ and
\[
h^{1,0}(S)\equiv b, \quad h^{0,1}(S)\equiv c \pmod m.
\]
Now consider the blowup $X_3 \to X_2$ in $S$. The exceptional divisor is a $\PP^{n-3}$-bundle
$\PP_S(\mathscr E)$ over $S$. Let $Z \subseteq \PP^{n-3}$ be a smooth hypersurface of degree $d$ in a linear
subspace $\PP^{d-1} \subseteq \PP^{n-3}$; in particular, $Z$ satisfies $h^{d-2,0}(Z) = h^{0,d-2}(Z) = 1$.

By Maruyama's theory of elementary transformations (see \autoref{prop:elementary}), there exists a diagram
\nopagebreak\vspace{-1em}
\begin{equation*}
\begin{tikzcd}[column sep=.1em,row sep=2.3em]
& \widetilde{P}
\ar[start anchor={[xshift=.5em]south west},end anchor={[xshift=-1.4em]north east}]{ld}[swap]{f}
\ar[start anchor={[xshift=-.5em]south east}, end anchor={[xshift=1.4em]north west}]{rd}{f'} & & & & \\
S \times \PP^{n-3} & & \PP_S(\mathscr E) \ar[hook]{rrr} & \makebox{} & \makebox{} & X_3\punct{,}
\end{tikzcd}
\end{equation*}
where $f$ and $f'$ are blowups in smooth centres $Y$ and $Y'$ respectively, such that $Y \cap(S \times Z)$ is smooth.
Then the blowup $X_4 \to X_3$ in $Y'$ contains the strict transform
\[
W = \widetilde{S \times Z} = \operatorname{Bl}_{Y \cap (S \times Z)}(S \times Z)
\]
of $S \times Z$ under $f$.
Birational invariance of outer Hodge numbers (in the case of a blowup this is \autoref{lemma:blowup})
and the K\"unneth formula (\autoref{lemma:kuenneth}) give
\begin{align*}
h^{d,0}(W) &= h^{0,d}(W) = h^{d,0}(S \times Z) = h^{2,0}(S)h^{d-2,0}(Z) \equiv 0 \pmod m, \\
h^{d-1,0}(W) &= h^{d-1,0}(S \times Z) = h^{2,0}(S)h^{d-3,0}(Z) + h^{1,0}(S)h^{d-2,0}(Z) \equiv b \pmod m, \\
h^{0,d-1}(W) &= h^{0,d-1}(S \times Z) = h^{0,2}(S)h^{0,d-3}(Z) + h^{0,1}(S)h^{0,d-2}(Z) \equiv c \pmod m.
\end{align*}
Blowing up $m-1$ more points coming from $X$ and repeating the above construction $m-1$ more times
in each exceptional $\PP^{n-1}$ separately, the blowup formula of \autoref{lemma:blowup} shows
that the Hodge numbers of $X$ do not change modulo~$m$.
\end{proof}

\begin{corollary}\label{corollary:asymmetric-hodge}
Let $X$ be a smooth projective $k$-variety of dimension~$n$, let $b, c \in \ZZ$, and let $r \in \{1,\ldots,n-1\}$.
Assume that $b=c$ if $r=1$ or $r=n-1$.
Then there exists a birational morphism $\tilde X \to X$
obtained by a sequence of blowups in smooth centres such that
\[
h^{r,1}(\tilde X) \equiv h^{r,1}(X)+b, \quad h^{1,r}(\tilde X)\equiv h^{1,r}(X)+c \pmod m
\]
and
\[
h^{p,1}(\tilde X) \equiv h^{p,1}(X), \quad h^{1,p}(\tilde X)\equiv h^{1,p}(X)\pmod m
\]
for all $p > r$.
\end{corollary}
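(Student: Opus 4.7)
The entire result will follow from the blowup formula \autoref{lemma:blowup}. If we blow up a smooth subvariety $W$ of dimension $d$ in a smooth projective $n$-fold, then the change in $h^{p,q}$ is $\sum_{i=1}^{n-d-1} h^{p-i,q-i}(W)$, which for $q=1$ (respectively $p=1$) collapses to the single term $h^{p-1,0}(W)$ (respectively $h^{0,p-1}(W)$). It therefore suffices to construct, after possibly replacing $X$ by a birational modification preserving Hodge numbers modulo~$m$, a smooth subvariety $W\subseteq \tilde X_0$ with $h^{r-1,0}(W)\equiv b$, $h^{0,r-1}(W)\equiv c$, and $h^{j,0}(W)\equiv h^{0,j}(W)\equiv 0\pmod m$ for all $j\ge r$; then $\tilde X$ will be the blowup of $\tilde X_0$ at $W$, and the blowup formula will deliver the required Hodge number congruences.

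For $r\in\{2,\ldots,n-2\}$ this will be handled by a direct application of \autoref{lemma:asymmetric subvarieties} with $d=r$: the lemma produces precisely such a $W$ of dimension $r$. The outer cases must be treated separately because that lemma does not apply. When $r=1$ (so $b=c$ by hypothesis), I would simply take $\tilde X$ to be the blowup of $X$ at $b$ distinct closed points; here $W$ is $0$-dimensional with $h^{0,0}(W)=b$, and the blowup formula shows that this changes only the diagonal Hodge numbers $h^{p,p}$ for $1\le p\le n-1$, each by $+b$. When $r=n-1$ (again with $b=c$), I would first blow up $b$ distinct points of $X$ to create $b$ disjoint exceptional divisors $E_i\cong\PP^{n-1}$, then pick a smooth hypersurface $W_i\subseteq E_i$ of degree $n$: by adjunction each $W_i$ is a smooth Calabi--Yau $(n-2)$-fold with $h^{n-2,0}(W_i)=h^{0,n-2}(W_i)=1$, so blowing up the disjoint union $W_1\sqcup\cdots\sqcup W_b$ yields the desired $\tilde X$.

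The substantive work of the corollary has been absorbed into \autoref{lemma:asymmetric subvarieties}, and the remaining argument is a direct bookkeeping exercise with the blowup formula. The one point requiring some care is the family of conditions ``for all $p>r$'': I would verify these by observing that in each case either $\dim W<r$ (for the outer cases, so $h^{j,0}(W)=0$ automatically for $j\ge r$) or else $\dim W=r$ with $h^{r,0}(W)\equiv 0\pmod m$ arranged by the lemma (in the middle range); in both situations $h^{p-1,0}(W)$ and $h^{0,p-1}(W)$ vanish modulo~$m$ for every $p>r$, so the blowup formula leaves $h^{p,1}$ and $h^{1,p}$ unchanged modulo~$m$ as required.
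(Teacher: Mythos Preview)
Your argument is correct and follows the same three-case structure as the paper's proof: \autoref{lemma:asymmetric subvarieties} for $r\in\{2,\ldots,n-2\}$, blowing up points for $r=1$, and blowing up points followed by degree-$n$ hypersurfaces in the exceptional $\PP^{n-1}$'s for $r=n-1$. The only cosmetic slip is that you write ``blow up $b$ distinct points'' where $b\in\ZZ$ may be negative; the paper (and you implicitly) mean to blow up $i\geq 0$ points with $i\equiv b\pmod m$.
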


\begin{proof}
If $r\in\{2,\ldots,n-2\}$, then \autoref{lemma:asymmetric subvarieties} shows that
there exists a successive blowup $X' \to X$ that does not change the Hodge numbers modulo~$m$
such that $X'$ contains a subvariety $W$ of dimension~$r$ satisfying \eqref{eq:W-zero} and \eqref{eq:W-bc}.
Letting $\tilde X \to X'$ be the blowup in $W$ gives the result by \autoref{lemma:blowup}.

For $r=1$, we consider the blowup in $i\ge0$ points where $i\equiv b=c\pmod m$. Then the statement follows
again from \autoref{lemma:blowup}.

For $r=n-1$, we first blow up $X$ in $i\ge0$ points where $i\equiv b=c\pmod m$.
Then, in each exceptional $\PP^{n-1}$ we blow up a smooth hypersurface $Z$ of degree~$n$.
Since $h^{n-2,0}(Z)=h^{0,n-2}(Z)=1$, the result follows from \autoref{lemma:blowup}.
\end{proof}
\vskip-\lastskip
We are now able to solve the construction problem modulo~$m$ for the second outer Hodge numbers,
i.\,e.\ the inner Hodge numbers $h^{p,q}$ with $p\in\{1,n-1\}$ or $q\in\{1,n-1\}$,
via repeated blowups in smooth centres.
By Serre duality, it is enough to consider the Hodge numbers $h^{p,q}$ with
$(p,q)\in I_n$, where
\[
I_n=\left\{(1,q)\ \Big|\ q\in\{1,\ldots,n-1\}\right\}\cup\left\{(p,1)\ \Big|\  p\in\{1,\ldots,n-1\}\right\}.
\]
\begin{corollary}\label{corollary:second-outer}
Let $X$ be a smooth projective $k$-variety of dimension~$n$.
For any given collection of integers $(a^{p,q})_{(p,q)\in I_n}$ with $a^{n-1,1}=a^{1,n-1}$,
there exists a birational morphism $\tilde X\to X$ obtained by a sequence of blowups in smooth centres such that
\[
h^{p,q}(\tilde X)\equiv a^{p,q} \pmod m
\]
for all $(p,q)\in I_n$.
\end{corollary}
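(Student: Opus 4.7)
The plan is to apply \autoref{corollary:asymmetric-hodge} iteratively, sweeping through $r = n-1, n-2, \ldots, 1$ in decreasing order. At step $r$, I adjust $h^{r,1}$ and $h^{1,r}$ to their target residues modulo $m$ by feeding \autoref{corollary:asymmetric-hodge} the values $b_r := a^{r,1} - h^{r,1}(\text{current})$ and $c_r := a^{1,r} - h^{1,r}(\text{current})$. Crucially, the corollary guarantees that $h^{p,1}$ and $h^{1,p}$ for $p > r$ are preserved modulo $m$, so the values set at earlier steps remain valid and the induction threads through cleanly.

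More concretely, set $X_n := X$ and, inductively, assume we have built a sequence of blowups $X_{r+1} \to X$ in smooth centres with $h^{p,1}(X_{r+1}) \equiv a^{p,1}$ and $h^{1,p}(X_{r+1}) \equiv a^{1,p} \pmod m$ for all $p > r$ (vacuous when $r = n-1$). Applying \autoref{corollary:asymmetric-hodge} to $X_{r+1}$ with the parameters $b_r$ and $c_r$ above produces $X_r \to X_{r+1}$, which by construction satisfies the inductive hypothesis with $r$ replaced by $r-1$. Taking $\tilde X := X_1$ then realises the desired congruences for every $(p,q) \in I_n$, and $\tilde X \to X$ is by construction a composition of blowups in smooth centres.

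The only hypothesis of \autoref{corollary:asymmetric-hodge} that needs checking is the constraint $b_r = c_r$ at the extremes $r \in \{1, n-1\}$, and this is where the main (mild) subtlety lies. For $r = 1$, the indices $(r,1) = (1,r) = (1,1)$ coincide and the targets $a^{r,1}, a^{1,r}$ both equal $a^{1,1}$, so $b_1 = c_1$ trivially. For $r = n-1$, no adjustment has yet been made, so Serre duality on the original smooth projective variety $X$ gives $h^{n-1,1}(X) = h^{1,n-1}(X)$; combined with the standing hypothesis $a^{n-1,1} = a^{1,n-1}$, this forces $b_{n-1} = c_{n-1}$. The intermediate cases $r \in \{2, \ldots, n-2\}$ impose no symmetry constraint, so no further compatibility issues can arise and the induction goes through.
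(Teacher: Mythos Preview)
Your proof is correct and follows the same strategy as the paper: apply \autoref{corollary:asymmetric-hodge} for $r=n-1,n-2,\ldots,1$ in descending order, using at each step the discrepancies $b_r=a^{r,1}-h^{r,1}$ and $c_r=a^{1,r}-h^{1,r}$ of the current variety, and invoking Serre duality together with the hypothesis $a^{n-1,1}=a^{1,n-1}$ to verify $b_{n-1}=c_{n-1}$. Your write-up is in fact more careful than the paper's terse version, since you make explicit that the parameters $b_r,c_r$ are computed from the intermediate variety $X_{r+1}$ rather than the original $X$.
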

\begin{proof}
For $r\in\{1,\ldots,n-1\}$, let $b=a^{r,1}-h^{r,1}(X)$ and $c=a^{1,r}-h^{1,r}(X)$.
We see that $b=c$ if $r=1$ or $r=n-1$.
Hence, we may apply \autoref{corollary:asymmetric-hodge} for all $r\in\{1,\ldots,n-1\}$
in descending order to obtain the result.
\end{proof}
\vskip-\lastskip
Finally, we are ready to prove \autoref{theorem:birational}, which together with \autoref{proposition:outer}
implies our main result \autoref{theorem:main}.
\begin{proof}[Proof of \autoref{theorem:birational}]
We will proceed by induction on $n$.
The case $n \leq 1$ is vacuous, as there are no inner Hodge numbers.
Let $n \geq 2$, and assume the result is known in all dimensions $\leq n-1$.
By \autoref{corollary:second-outer}, there exists a birational morphism $X_1\to X$
obtained by a sequence of blowups in smooth centres such that for $(p,q)\in I_n$ we have
\[
h^{p,q}(X_1) \equiv a^{p,q} - h^{p-1,q-1}\left(\PP^{n-2}\right)
\pmod m.
\]
Let $X_2 \to X_1$ be the blowup in a point,
and let $\PP^{n-2} \subseteq X_2$ be a hyperplane in the exceptional divisor.
By the induction hypothesis, there exists a birational morphism $\tilde P \to \PP^{n-2}$
obtained by a sequence of blowups in smooth centres such that the Hodge numbers of $\tilde P$ are given by
\[
h^{p,q}(\tilde P) \equiv \begin{cases}
h^{p,q}\left(\PP^{n-2}\right), & p\in\{0,n-2\}\text{ or }q\in\{0,n-2\}, \\
a^{p+1,q+1} - h^{p+1,q+1}(X_1), & \text{else.}
\end{cases} \pmod m
\]
Since $\tilde P \to \PP^{n-2}$ is a sequence of blowups in smooth centres, we can blow up the
(strict transforms of) the same centres in $X_2$ to get a birational morphism $X_3 \to X_2$
such that the strict transform of $\PP^{n-2}$ is $\tilde P$.
Blowing up $m-1$ more points coming from $X_1$
and applying the same construction in each of the exceptional divisors separately
gives a birational morphism $X_4 \to X_1$ that does not change the Hodge numbers modulo~$m$
by the blowup formula of \autoref{lemma:blowup}.
Finally, if we let $\tilde X \to X_4$ be the blowup in one of the $\tilde P$ obtained in this way,
we get
\[
h^{p,q}(\tilde X) = h^{p,q}(X_1) + h^{p-1,q-1}(\tilde P) \equiv a^{p,q} \pmod m
\]
for all $(p,q)$ with $1 \leq p, q \leq n - 1$, which finishes the induction step.
\end{proof}

\begin{remark}\label{remark:resolution}
The proof above can be simplified if one assumes embedded resolution of singularities in arbitrary dimension.
Indeed, by blowing up a finite number of points, we may assume that $h^{1,1}(X) \equiv a^{1,1} - 1 \pmod m$
and $X$ contains $\PP^{n-1}$. Now we claim that we can construct an $(n-2)$-dimensional subvariety $Y$
in a blowup $X' \to X$ with $h^{p,q}(X') \equiv h^{p,q}(X) \pmod m$ such that $h^{p,q}(Y) \equiv a^{p+1,q+1} - h^{p+1,q+1}(X) \pmod m$.
Then the blowup $\tilde X \to X'$ in $Y$ has the required Hodge numbers.

To construct $Y$, first construct any smooth projective variety $Z$ of dimension $n - 2$ 
with the correct outer Hodge numbers using \autoref{proposition:outer}.
Then $Z$ is birational to a (possibly singular) hypersurface $Z' \subseteq \PP^{n-1}$. Embedded resolution
of $Z' \subseteq \PP^{n-1}$ gives a birational map $X' \to X$ such that the strict transform of $Z'$ is smooth,
so $Z'$ has the desired outer Hodge numbers by \cite[Thm.~1]{chatzistamatiou-rulling}.
By the induction hypothesis we may blow up further to get the inner Hodge numbers we want.
Repeating this construction $m-1$ more times, as usual, gives $h^{p,q}(X') \equiv h^{p,q}(X) \pmod m$.

However, because resolution of singularities is currently unknown in positive characteristic beyond dimension~$3$,
we have developed the above approach using embedded resolution of surfaces, Maruyama's theory of elementary
transformations of projective bundles, and the fortuitous fact that the failure of Hodge symmetry is 
`generated' by surfaces (see also \cite[Thm.~2]{vDdB}).
\end{remark}

\begin{remark}
Both the proof of \autoref{theorem:birational} above (replacing \autoref{lemma:asymmetric subvarieties}
by an easy case of \cite[Lem.~6]{paulsen-schreieder}) and the alternative argument of
\autoref{remark:resolution} using resolution of singularities give new methods to prove the
characteristic zero result \cite[Thm.~5]{paulsen-schreieder}.

Conversely, it is possible to adapt the methods of \cite[\S 3]{paulsen-schreieder} to prove
\autoref{theorem:birational}, using the subvarieties from \cite[Lem.~6]{paulsen-schreieder} as well as
projective bundles over the subvarieties from \autoref{lemma:asymmetric subvarieties},
but the analysis is a bit more intricate.
\end{remark}

\section{Polynomial relations}\label{section:poly}

\begin{corollary}\label{corollary:poly}
There are no polynomial relations among the Hodge numbers of smooth projective $k$-varieties of the
same dimension besides the ones induced by Serre duality.
\end{corollary}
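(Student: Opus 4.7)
The plan is to deduce this from \autoref{theorem:main} by a standard Zariski density argument. Suppose for contradiction that $P$ is a non-zero polynomial in variables $x^{p,q}$ (for $0\le p,q\le n$) that vanishes on $(h^{p,q}(X))_{p,q}$ for every smooth projective $k$-variety $X$ of dimension $n$, and suppose that $P$ is non-trivial modulo the relations $x^{p,q}=x^{n-p,n-q}$ from Serre duality. After clearing denominators, we may assume $P$ has integer coefficients.

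Let $V\subseteq\ZZ^{(n+1)^2}$ denote the set of integer tuples $(a^{p,q})$ satisfying $a^{0,0}=1$ and $a^{p,q}=a^{n-p,n-q}$. Since $P$ is non-zero modulo the linear relations defining $V$, and since the $\ZZ$-points of an affine space are Zariski-dense in the ambient space, $P$ does not vanish identically on $V$. Hence there exists $(a^{p,q})\in V$ with $P\bigl((a^{p,q})\bigr)=N$ for some non-zero integer $N$.

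Now set $m=|N|+1\ge2$, so that $N\not\equiv0\pmod m$. By \autoref{theorem:main}, there exists a smooth projective $k$-variety $X$ of dimension $n$ with $h^{p,q}(X)\equiv a^{p,q}\pmod m$ for all $0\le p,q\le n$. Since $P$ has integer coefficients, reducing modulo $m$ gives
\[
P\bigl((h^{p,q}(X))\bigr)\equiv P\bigl((a^{p,q})\bigr)=N\not\equiv 0\pmod m,
\]
so in particular $P\bigl((h^{p,q}(X))\bigr)\ne 0$, contradicting our assumption.

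There is no serious obstacle here; the entire force of the argument is carried by \autoref{theorem:main}, with the density of integer points providing the routine passage from polynomial non-vanishing to non-vanishing modulo some integer $m$. The same template (applied to inner Hodge numbers only, using \autoref{theorem:birational} in place of \autoref{theorem:main}) will then yield \autoref{corollary:poly-birational}.
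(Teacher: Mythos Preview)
Your argument is correct and is precisely the approach the paper takes: it deduces the corollary from \autoref{theorem:main} by the Zariski density argument recorded as \cite[Lem.~8]{paulsen-schreieder}, exactly as in \cite[Cor.~3]{paulsen-schreieder}. One small wrinkle in your write-up: the relation $h^{0,0}=1$ is not a consequence of Serre duality, so the step from ``$P$ is non-trivial modulo the relations $x^{p,q}=x^{n-p,n-q}$'' to ``$P$ is non-zero modulo the linear relations defining $V$'' is not literally justified (take $P=x^{0,0}-1$). The paper handles this by simply working with the variables $h^{p,q}$ for $(p,q)\neq(0,0),(n,n)$; you should do the same, or else add $x^{0,0}-1$ to the ideal of allowed relations.
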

\begin{proof}
Using \cite[Lem.~8]{paulsen-schreieder}, this follows from \autoref{theorem:main} in the same way as
\cite[Cor.~3]{paulsen-schreieder},
except that we now consider the Hodge numbers $h^{p,q}$ with $0\le p\le q\le n$ and $(p,q)\ne(0,0),(n,n)$.
\end{proof}

\begin{corollary}\label{corollary:poly-birational}
There are no polynomial relations among the inner Hodge numbers of smooth projective $k$-varieties
of any fixed birational equivalence class besides the ones induced by Serre duality.
\end{corollary}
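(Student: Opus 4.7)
The plan is to mirror the proof of \autoref{corollary:poly}, replacing the appeal to \autoref{theorem:main} by an appeal to \autoref{theorem:birational}. Fix a smooth projective $k$-variety $X$ of dimension $n$, and let $P$ be a polynomial in variables $x^{p,q}$ indexed by the inner range $1 \le p, q \le n-1$, subject to the Serre duality identification $x^{p,q} = x^{n-p,n-q}$ (so that $P$ really is a polynomial in the free variables indexing Serre duality orbits). We must show that if $P$ vanishes on the inner Hodge numbers of every smooth projective $\tilde X$ birational to $X$, then $P$ is identically zero.

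The first step is to fix an arbitrary modulus $m \ge 2$ and apply \autoref{theorem:birational} to $X$: for every collection of integers $(a^{p,q})_{1 \le p,q \le n-1}$ respecting Serre duality, there exists a smooth projective variety $\tilde X$ birational to $X$ whose inner Hodge numbers realise these integers modulo $m$. Substituting into the hypothesis $P((h^{p,q}(\tilde X))) = 0$, we conclude that $P((a^{p,q})) \equiv 0 \pmod m$ for every such integer tuple.

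The second step is to invoke \cite[Lem.~8]{paulsen-schreieder}, which asserts that a polynomial taking values divisible by $m$ on all integer inputs must itself have all coefficients divisible by $m$. As in the proof of \autoref{corollary:poly}, we apply this to $P$ viewed as a polynomial in a set of free variables, namely one representative $x^{p,q}$ for each Serre duality orbit with $1 \le p, q \le n-1$; the range now excludes $(0,0)$, $(n,n)$ and all outer indices, in contrast to \autoref{corollary:poly}. We conclude that every coefficient of $P$ is divisible by $m$. Since $m \ge 2$ was arbitrary and $P$ has only finitely many coefficients, choosing $m$ larger than the maximal absolute value of these coefficients forces $P = 0$.

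There is no real obstacle here: the argument is a verbatim transposition of the proof of \autoref{corollary:poly}, with the only bookkeeping difference being the restriction of the index set to the inner range. The heavy lifting has already been done in \autoref{theorem:birational}, and \cite[Lem.~8]{paulsen-schreieder} is a combinatorial statement independent of any geometry.
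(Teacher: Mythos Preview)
Your approach is correct and is precisely what the paper does (its proof is the single sentence ``This follows from \autoref{theorem:birational} in a similar fashion''). One caveat: your paraphrase of \cite[Lem.~8]{paulsen-schreieder}---that a polynomial vanishing modulo $m$ on all integer inputs must have all coefficients divisible by $m$---is false as stated (take $P(x)=x^2-x$ with $m=2$). Fortunately this intermediate step is unnecessary: once you have $P(a)\equiv 0\pmod m$ for every integer tuple $a$ and every $m\ge 2$, you get $P(a)=0$ for every integer $a$ immediately, hence $P=0$. The role of \cite[Lem.~8]{paulsen-schreieder} in the original argument is rather to assert that a subset of $\ZZ^N$ surjecting onto $(\ZZ/m)^N$ for all $m$ is Zariski dense, which you may apply directly to the set of inner Hodge tuples of varieties birational to $X$.
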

\begin{proof}
This follows from \autoref{theorem:birational} in a similar fashion.
\end{proof}

\appendix
\phantomsection
\section*{Appendix. Elementary transformations of vector bundles}\label{section:elementary}
\refstepcounter{section}
We include a quick coordinate-free proof of Maruyama's theory of elementary transformations
of vector bundles \cite{maruyama,maruyama2}. See \autoref{thm:maruyama} for the main result, and \autoref{prop:elementary} for the example that we will use.

\begin{setup}\label{setup Maruyama}
Let $S$ be a scheme and $D \subseteq S$ a Cartier divisor. We will consider a vector bundle $\mathscr E$
on $S$ together with a quotient bundle $\mathscr E|_D \twoheadrightarrow \mathscr F$. Write $\mathscr E'$
and $\mathscr F'$ for the kernels of $\mathscr E \to \mathscr F$ and $\mathscr E|_D \to \mathscr F$
respectively, so we get a commutative diagram
\begin{equation}\label{dia Maruyama}
\begin{tikzcd}
0 \ar{r} & \mathscr E' \ar{r}\ar{d} & \mathscr E \ar{r}\ar{d} & \mathscr F \ar{r}\ar[equal]{d} & 0\\
0 \ar{r} & \mathscr F' \ar{r} & \mathscr E\big|_D \ar{r} & \mathscr F \ar{r} & 0\punct{.}
\end{tikzcd}
\end{equation}
Write $\pi \colon X = \PP_S(\mathscr E) \to S$, with tautological quotient line bundle
$\pi^* \mathscr E \twoheadrightarrow \OO_\pi(1)$. The surjection $\mathscr E \twoheadrightarrow \mathscr F$
induces a closed immersion $i \colon Y = \PP_D(\mathscr F) \hookrightarrow \PP_S(\mathscr E)$.
Let $f \colon \tilde X \to X$ be the blowup of $X$ in $Y$ with exceptional divisor $E = f^{-1}(Y)$,
and set $\tilde \pi = \pi \circ f$. The preimage $\tilde \pi^{-1}(D)$ consists of $E$ and $\widetilde{X_D}$,
whose intersection is the exceptional divisor of $\widetilde{X_D} \to X_D = \PP_D(\mathscr E|_D)$.
\end{setup}

\begin{lemma}\label{lemma:centre}
Let $X$, $D$, and $\mathscr E \twoheadrightarrow \mathscr F$ be as in \autoref{setup Maruyama}.
Then $Y \subseteq X$ is cut out by the image of the composite map
\[
(\pi^*\mathscr E') \otimes \OO_\pi(-1) \to (\pi^*\mathscr E) \otimes \OO_\pi(-1) \twoheadrightarrow \OO_X.
\]
\end{lemma}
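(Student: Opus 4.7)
The plan is to compare functors of points. Tensoring the tautological surjection $\pi^*\mathscr E \twoheadrightarrow \OO_\pi(1)$ by $\OO_\pi(-1)$ yields a surjection $\pi^*\mathscr E \otimes \OO_\pi(-1) \twoheadrightarrow \OO_X$, so the composite $\varphi \colon \pi^*\mathscr E' \otimes \OO_\pi(-1) \to \OO_X$ has image an ideal sheaf $\mathscr I \subseteq \OO_X$, cutting out a closed subscheme $V(\mathscr I) \subseteq X$. It then suffices to show $V(\mathscr I) = Y$ as closed subschemes, which I would reformulate as a statement about functors of points.

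By Grothendieck's universal property of $\PP_S(\mathscr E)$, a morphism $g \colon T \to X$ is equivalent to a pair $(f \colon T \to S,\, q \colon f^*\mathscr E \twoheadrightarrow L)$ with $L$ a line bundle on $T$. Pulling $\varphi$ back along $g$ and untwisting by $L$ recovers the composite $f^*\mathscr E' \to f^*\mathscr E \xrightarrow{q} L$, and $g$ factors through $V(\mathscr I)$ if and only if this composite is zero. Applying right-exactness of $f^*$ to the short exact sequence $0 \to \mathscr E' \to \mathscr E \to i_*\mathscr F \to 0$ on $S$, this vanishing is equivalent to $q$ factoring through $f^*(i_*\mathscr F)$.

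Because $L$ is locally free of rank one, the existence of a surjection to $L$ from a module annihilated by the ideal of $D$ forces $f$ to factor through $D$ scheme-theoretically; once that holds, base change gives $f^*(i_*\mathscr F) = (f|_D)^*\mathscr F$, and the remaining factorization of $q$ is precisely the universal-property data describing a $T$-point of $Y = \PP_D(\mathscr F) \hookrightarrow \PP_S(\mathscr E)$. Conversely, every $T$-point of $Y$ yields such a factorization. Hence $V(\mathscr I)$ and $Y$ have identical functors of points over $X$, so they coincide as closed subschemes.

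The main potential obstacle is ensuring agreement scheme-theoretically rather than merely set-theoretically; the functor-of-points approach handles this automatically, provided one allows arbitrary (possibly non-reduced) test schemes $T$ so that nilpotents are detected. As a sanity check, the result can also be verified by a direct local calculation: trivialize $\mathscr E$ and choose a local splitting of $\mathscr E|_D \twoheadrightarrow \mathscr F$ so that $\mathscr E'$ acquires the explicit local basis $e_1,\dots,e_a,\,te_{a+1},\dots,te_n$ (where $t$ is a local equation for $D$ and $e_{a+1},\dots,e_n$ lift a basis of $\mathscr F$), and verify on each standard affine chart of $\PP_S(\mathscr E)$ that the generators of the image ideal reduce to the expected equations cutting out $Y$.
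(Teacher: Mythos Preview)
Your argument is correct, but it proceeds differently from the paper's. The paper works directly with graded rings: applying $\PROJ_S$ to the surjection $\Sym^*_{\OO_S}\mathscr E \twoheadrightarrow \Sym^*_{\OO_D}\mathscr F$ gives $Y \hookrightarrow X$, and the ideal in $\Sym^*_{\OO_S}\mathscr E$ generated by $\mathscr E'$ in degree~$1$ has quotient $\Sym^*_{\OO_S}\mathscr F$, which agrees with $\Sym^*_{\OO_D}\mathscr F$ in positive degrees since $\mathscr F$ is supported on $D$; equality of $\PROJ$ follows. By contrast, you identify the two closed subschemes by matching their functors of points, using the universal properties of $\PP_S(\mathscr E)$ and $\PP_D(\mathscr F)$ together with right-exactness of pullback applied to $0 \to \mathscr E' \to \mathscr E \to i_*\mathscr F \to 0$. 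The paper's route is shorter and stays at the level of the homogeneous coordinate ring; your route is more moduli-theoretic and has the pleasant side effect of essentially setting up the description used in the paper's next corollary on $T$-points of the blowup. Your closing local computation is a third independent check and is also fine.
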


\begin{proof}
The map $Y \hookrightarrow X$ is given by applying $\PROJ_S$ to the surjection of $\OO_S$-algebras
\[
\Sym^*_{\OO_S} \mathscr E \twoheadrightarrow \Sym^*_{\OO_D} \mathscr F.
\]
The quotient of $\Sym^*_{\OO_S} \mathscr E$ by the ideal generated by $\mathscr E'(-1) \subseteq \Sym^*_{\OO_S} \mathscr E$
is $\Sym^*_{\OO_S} \mathscr F$, which coincides with $\Sym^*_{\OO_D} \mathscr F$ in all
positive degrees since $\mathscr F$ is supported on $D$. The result follows since a morphism of
graded algebras that is eventually an isomorphism induces an isomorphism on $\PROJ$.
\end{proof}

\begin{corollary}\label{cor:universal property}
Let $X$, $D$, and $\mathscr E \twoheadrightarrow \mathscr F$ be as in \autoref{setup Maruyama}, and let
$q \colon T \to S$ be a morphism of schemes. Then morphisms $T \to \tilde X$ of $S$-schemes correspond
to pairs $(\mathscr L, \phi)$ of a line bundle $\mathscr L$ on $T$ and a surjection
$\phi \colon q^* \mathscr E \twoheadrightarrow \mathscr L$, up to isomorphism under $q^* \mathscr E$,
such that the image of the composite map
\[
q^* \mathscr E' \to q^* \mathscr E \twoheadrightarrow \mathscr L
\]
is an invertible subsheaf of $\mathscr L$.
\end{corollary}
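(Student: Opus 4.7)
The plan is to combine three universal properties: that of the projective bundle $\PP_S(\mathscr E)$, that of the blowup, and the explicit description of the ideal sheaf $\mathcal I_Y$ from \autoref{lemma:centre}.

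First I would recall that by the standard universal property of $\PP_S(\mathscr E)$, morphisms $g\colon T \to X$ of $S$-schemes correspond bijectively (up to isomorphism under $q^*\mathscr E$) to pairs $(\mathscr L,\phi)$ where $\mathscr L$ is a line bundle on $T$ and $\phi\colon q^*\mathscr E \twoheadrightarrow \mathscr L$ is a surjection; under this correspondence, $\mathscr L \cong g^*\OO_\pi(1)$ and $\phi$ is the pullback of the tautological quotient. Next I would invoke the universal property of the blowup: a morphism $g\colon T \to X$ factors (uniquely) through $f\colon \tilde X \to X$ if and only if the image of the natural map $g^*\mathcal I_Y \to \OO_T$ is an invertible ideal sheaf on $T$.

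The key step is then to translate this lifting condition into the stated condition on $\phi$. By \autoref{lemma:centre}, the ideal sheaf $\mathcal I_Y$ is the image of the composite
\[
(\pi^*\mathscr E') \otimes \OO_\pi(-1) \to (\pi^*\mathscr E) \otimes \OO_\pi(-1) \twoheadrightarrow \OO_X.
\]
Pulling back along $g$, the image of $g^*\mathcal I_Y \to \OO_T$ coincides with the image of the composite
\[
q^*\mathscr E' \otimes \mathscr L^{-1} \to q^*\mathscr E \otimes \mathscr L^{-1} \xrightarrow{\ \phi \otimes \mathrm{id}\ } \OO_T,
\]
since taking image commutes with the right exact operation of tensoring with a line bundle and pulling back. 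Tensoring this image by the line bundle $\mathscr L$ yields exactly the image of $q^*\mathscr E' \to \mathscr L$, and tensoring by a line bundle preserves invertibility of a subsheaf. Thus the blowup condition that $g^{-1}\mathcal I_Y \cdot \OO_T$ be invertible is equivalent to the condition that the image of $q^*\mathscr E' \to \mathscr L$ be an invertible subsheaf of $\mathscr L$.

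The only subtlety I would need to be careful about is the equivalence of the two formulations of ``invertible image''—i.e., that invertibility of the ideal image of $q^*\mathscr E' \otimes \mathscr L^{-1} \to \OO_T$ really is preserved under tensoring by $\mathscr L$ to recover an invertible subsheaf of $\mathscr L$; this is formal, since $-\otimes \mathscr L$ is an exact autoequivalence on quasi-coherent sheaves. Once that is in place, combining the three bijections above yields the claimed description of $T$-points of $\tilde X$.
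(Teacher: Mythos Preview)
Your argument is correct and is exactly the approach the paper intends: the paper's proof is the single sentence ``This follows from the universal properties of projective bundles and blowups,'' and you have simply unpacked that sentence using \autoref{lemma:centre} to identify $\mathcal I_Y$ and translate the invertibility condition. There is nothing to add.
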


\begin{proof}
This follows from the universal properties of projective bundles and blowups.
\end{proof}
\vskip-\lastskip
The basic duality of the situation is captured by the following lemma:

\begin{lemma}\label{lemma:duality}
Let $S$, $D$, and $\mathscr E \twoheadrightarrow \mathscr F$ be as in \autoref{setup Maruyama}.
Then $\mathscr E'$ (resp.\ $\mathscr F'$) is locally free on $S$ (resp.\ $D$),
and $\mathscr E' \twoheadrightarrow \mathscr F'$ is another instance of \autoref{setup Maruyama}.
Applying this operation twice gives $\mathscr E(-D) \twoheadrightarrow \mathscr F(-D)$.
\end{lemma}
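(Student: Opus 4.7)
The plan is to verify all three claims by a single local computation. I would work on an open of $S$ where $D = V(t)$ for a non-zero-divisor $t$ and the surjection $\mathscr E|_D \twoheadrightarrow \mathscr F$ admits a splitting, and I would fix a local frame $e_1,\ldots,e_n$ of $\mathscr E$ such that $e_1|_D,\ldots,e_s|_D$ lift a frame of $\mathscr F$ via the splitting, while $e_{s+1}|_D,\ldots,e_n|_D$ form a frame of the complementary subbundle $\mathscr F' \subseteq \mathscr E|_D$. This frame will carry the whole argument.

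A section $\sum a_i e_i$ of $\mathscr E$ lies in $\mathscr E'$ iff $a_i \in (t)$ for $i \leq s$, so $\mathscr E'$ is locally free on $S$ with frame $te_1,\ldots,te_s,e_{s+1},\ldots,e_n$; local freeness of $\mathscr F'$ on $D$ is immediate from the bottom row of \eqref{dia Maruyama}. Since $\mathscr F'$ is supported on $D$, the map $\mathscr E' \to \mathscr F'$ in \eqref{dia Maruyama} factors through $\mathscr E'|_D$; in the chosen frame it kills $\overline{te_i}$ for $i \leq s$ and sends $\overline{e_j} \mapsto e_j|_D$ for $j > s$, so it is surjective onto $\mathscr F'$. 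This exhibits $\mathscr E'|_D \twoheadrightarrow \mathscr F'$ as a new instance of \autoref{setup Maruyama}.

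Iterating once more in the same frame, the kernel $\mathscr E''$ of $\mathscr E' \to \mathscr F'$ on $S$ is locally spanned by $te_1,\ldots,te_n$ and so agrees with $\mathscr E(-D) \subseteq \mathscr E$, while the kernel $\mathscr F''$ of $\mathscr E'|_D \twoheadrightarrow \mathscr F'$ is spanned by $\overline{te_1},\ldots,\overline{te_s}$ and matches $\mathscr F(-D)$ via $\bar e_i \otimes \bar t \mapsto \overline{te_i}$. Coordinate-freely, this last identification drops out of tensoring $0 \to \mathscr E' \to \mathscr E \to \mathscr F \to 0$ with $\OO_D$ over $\OO_S$: the resulting long exact sequence reads
\[
0 \to \sTor_1^{\OO_S}(\OO_D,\mathscr F) \to \mathscr E'|_D \to \mathscr E|_D \to \mathscr F \to 0,
\]
and the leftmost term is canonically $\mathscr F(-D)$.

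The only real obstacle is bookkeeping: ensuring the local identifications $\mathscr E'' \cong \mathscr E(-D)$ and $\mathscr F'' \cong \mathscr F(-D)$ are canonical, independent of the chosen frame. For $\mathscr E''$ this is transparent because both are characterised as the subsheaf of $\mathscr E$ whose local sections lie in $(t)\mathscr E$; for $\mathscr F''$ the Tor description above gives a choice-free global identification.
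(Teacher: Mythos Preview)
Your argument is correct. Both you and the paper ultimately invoke the same $\sTor$ long exact sequence
\[
0 \to \mathscr F(-D) \to \mathscr E'\big|_D \to \mathscr E\big|_D \to \mathscr F \to 0
\]
to identify $\mathscr F'' \cong \mathscr F(-D)$ canonically, but the routes to the other claims differ. The paper stays coordinate-free throughout: local freeness of $\mathscr E'$ comes from $\mathscr F$ having $\sTor$-dimension~$1$ on $S$, and the identification $\ker(\mathscr E' \to \mathscr F') = \mathscr E(-D)$ drops out of the snake lemma applied to the diagram~\eqref{dia Maruyama} (the kernel of the middle vertical map $\mathscr E \to \mathscr E|_D$ is $\mathscr E(-D)$, and the snake sequence reads $0 \to \mathscr E'' \to \mathscr E(-D) \to 0 \to \ldots$). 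Your approach instead fixes an adapted local frame and reads everything off directly, then checks a posteriori that the identifications are frame-independent. Your method is more concrete and makes the structure of $\mathscr E'$ visible, while the paper's snake-lemma argument is shorter and avoids the bookkeeping you flag at the end; in particular the paper never has to argue separately that the identification $\mathscr E'' = \mathscr E(-D)$ is canonical, since the snake lemma produces it as an equality of subsheaves of $\mathscr E$ from the outset.
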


\begin{proof}
Since $\mathscr F$ is a quotient bundle of $\mathscr E|_D$, it is clear that $\mathscr F'$ is locally free on $D$.
Moreover, since $\mathscr F$ has $\sTor$ dimension $1$ on $S$, we see that $\mathscr E'$ is a vector bundle.
Applying the snake lemma to \eqref{dia Maruyama} shows that the kernel of $\mathscr E' \to \mathscr F'$
is $\mathscr E(-D)$. Applying $\sTor_*^{\OO_S}(-, \OO_D)$ to the first row
of \eqref{dia Maruyama} gives the exact sequence
\[
0 \to \mathscr F(-D) \to \mathscr E'\big|_D \to \mathscr E\big|_D \to \mathscr F \to 0,
\]
which shows that the kernel of $\mathscr E'|_D \to \mathscr F'$ is $\mathscr F(-D)$. We omit the verification that the map $\mathscr E(-D) \twoheadrightarrow \mathscr F(-D)$ is obtained from the original one by twisting with $\OO_S(-D)$.
\end{proof}
\vskip-\lastskip
In analogy with the notation of \autoref{setup Maruyama}, write $\pi' \colon X' = \PP_S(\mathscr E') \to S$,
with closed subscheme $Y' = \PP_D(\mathscr F')$, and blowup $f' \colon \tilde X' \to X'$ in $Y'$
with exceptional divisor $E' = f'^{-1}(Y')$.

Finally, write $X(-D) = \PP_S(\mathscr E(-D))$, with closed subscheme $Y(-D) = \PP_D(\mathscr F(-D))$ 
and blowup $\tilde X(-D) \to X(-D)$ in $Y(-D)$. The natural isomorphisms $X(-D) \cong X$ and
$Y(-D) \cong Y$ lift to a natural isomorphism $\tilde X(-D) \cong \tilde X$, described in terms of
\autoref{cor:universal property} by $(\mathscr L,\phi) \mapsto (\mathscr L(D),\phi(D))$.

The duality of \autoref{lemma:duality} directly implies the main theorem of elementary transformations of vector bundles \cite[Thm.~1.1, Thm.~1.3]{maruyama}:

\begin{theorem}[Maruyama]\label{thm:maruyama}
Let $X$, $D$, and $\mathscr E \twoheadrightarrow \mathscr F$ be as in \autoref{setup Maruyama}.
Then there is a natural isomorphism of $S$-schemes $\tilde X \stackrel\sim\to \tilde X'$.
\end{theorem}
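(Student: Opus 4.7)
The plan is to show that $\tilde X$ and $\tilde X'$ represent the same functor on $S$-schemes, using the moduli description from \autoref{cor:universal property} together with the duality of \autoref{lemma:duality}. First, I would record the analog of \autoref{cor:universal property} for $\tilde X'$: by \autoref{lemma:duality}, the surjection $\mathscr E' \twoheadrightarrow \mathscr F'$ is another instance of \autoref{setup Maruyama} whose associated ``kernel bundle'' is $\mathscr E(-D)$. Hence, for $q \colon T \to S$, morphisms $T \to \tilde X'$ of $S$-schemes correspond to surjections $\psi \colon q^*\mathscr E' \twoheadrightarrow \mathscr M$ onto a line bundle $\mathscr M$, up to isomorphism under $q^*\mathscr E'$, such that the image of the composite $q^*\mathscr E(-D) \to q^*\mathscr E' \to \mathscr M$ is an invertible subsheaf.

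Next, I would define a natural transformation $\Phi$ from morphisms $T \to \tilde X$ to morphisms $T \to \tilde X'$ by corestriction. Given $(\mathscr L, \phi)$ classifying a map $T \to \tilde X$, let $\mathscr L'' \subseteq \mathscr L$ denote the image of $q^*\mathscr E' \to \mathscr L$, which is invertible by the defining property of $\tilde X$, and let $\phi' \colon q^*\mathscr E' \twoheadrightarrow \mathscr L''$ be the induced surjection. The image of $q^*\mathscr E(-D) \to \mathscr L''$ coincides with its image in the ambient $\mathscr L$, which equals $\mathscr L \otimes q^*\OO_S(-D)$ because $\phi$ is surjective and $\mathscr E(-D) \hookrightarrow \mathscr E$ is the twist of the identity by the ideal of $D$. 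In particular, this image is invertible, so $(\mathscr L'', \phi')$ satisfies the universal property of $\tilde X'$, and Yoneda produces a morphism $\Phi \colon \tilde X \to \tilde X'$.

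For invertibility, I would apply the same construction to the dual datum $\mathscr E' \twoheadrightarrow \mathscr F'$ to obtain a morphism $\Phi' \colon \tilde X' \to \tilde X(-D)$. Unwinding the definitions, the composite $\Phi' \circ \Phi$ sends $(\mathscr L, \phi)$ to $(\mathscr L \otimes q^*\OO_S(-D), \phi \otimes \mathrm{id})$, which under the canonical identification $\tilde X(-D) \cong \tilde X$ recalled just before \autoref{thm:maruyama} is sent back to $(\mathscr L, \phi)$. The symmetric computation shows $\Phi \circ \Phi'$ equals the analogous identification on $\tilde X'$, so $\Phi$ is an isomorphism.

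The main subtlety is the bookkeeping of inclusions and twists: one must verify that the image of $q^*\mathscr E(-D)$ inside $\mathscr L''$ genuinely coincides with its image in $\mathscr L$ (rather than merely being contained in it), and check that the twist by $\OO_S(-D)$ accumulated at each step is precisely cancelled by the $\tilde X(-D) \cong \tilde X$ identification after iterating twice. Once these points are nailed down, the rest is a routine Yoneda argument.
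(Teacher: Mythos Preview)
Your proposal is correct and follows essentially the same approach as the paper: both arguments use \autoref{cor:universal property} to identify the functors of points of $\tilde X$ and $\tilde X'$, define the comparison map by taking the image of $q^*\mathscr E'$ in $\mathscr L$, and verify invertibility by checking that the twofold composite is the twist by $\OO_S(-D)$ and hence cancelled by the identification $\tilde X(-D)\cong\tilde X$. If anything, you spell out one step the paper leaves implicit, namely why the pair $(\mathscr L'',\phi')$ actually lands in $h_{\tilde X'}(T)$.
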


\begin{proof}
For an $S$-scheme $Z$, write $h_Z$ for the functor $\operatorname{Hom}_S(-,Z)$. We will use the description of
\autoref{cor:universal property} to show that $h_{\tilde X}$ and $h_{\tilde X'}$ are naturally isomorphic,
which implies the result by the Yoneda lemma. Let $q \colon T \to S$ be a morphism of schemes.
Given $(\mathscr L,\phi) \in h_{\tilde X}(T)$, define $(\mathscr L',\phi') \in h_{\tilde X'}(T)$ as the image
\[
\phi' \colon q^* \mathscr E' \twoheadrightarrow \mathscr L' \hookrightarrow \mathscr L,
\]
noting that $\mathscr L'$ is invertible by \autoref{cor:universal property}. This gives a map
$h_{\tilde X} \to h_{\tilde X'}$, and switching the roles of $\mathscr E$ and $\mathscr E'$ using
\autoref{lemma:duality} and the natural isomorphism $\tilde X(-D) \cong \tilde X$ gives the opposite map.
The composition takes $(\mathscr L,\phi)$ to $(\mathscr L(-D),\phi(-D))$, hence under the identification
$\tilde X(-D) \cong \tilde X$ gives the identity map. The other composition follows dually.
\end{proof}
\vskip-\lastskip
This gives the geometric definition of elementary transformations:

\begin{definition}
Let $S$ be a smooth variety, and let $\mathscr E$ and $\mathscr E'$ be vector bundles of the same rank on $S$.
We say that \emph{there exists an elementary transformation between $X = \PP_S(\mathscr E)$ and $X' = \PP_S(\mathscr E')$}
if there exists a smooth divisor $D \subseteq S$, a line bundle $\mathscr L$ on $S$,
and a quotient bundle $\mathscr E|_D \twoheadrightarrow \mathscr F$ on $D$ such that the kernel
of $\mathscr E \twoheadrightarrow \mathscr F$ is $\mathscr E' \otimes \mathscr L$.
In this case, \autoref{thm:maruyama} gives a diagram
\[
\begin{tikzcd}[row sep=.8em,column sep =.2em]
 & \tilde X \ar{ld}[swap]{f}\ar{rd}{f'} & \\
X & & X'
\end{tikzcd}
\]
where $f$ and $f'$ are blowups in the smooth centres $Y \subseteq X$ and $Y' \subseteq X'$ respectively.
\end{definition}

To construct elementary transformations, we will use the following Bertini smoothness theorem for general
sections of a very ample vector bundle.

\begin{theorem}[Kleiman]\label{theorem:Bertini}
Let $X$ be a smooth projective variety of dimension $n$ over a field $k$, let $\mathscr E$ be a globally generated
vector bundle of rank $r$, and let $\mathscr L$ be a very ample line bundle. Then for a general section
$\sigma \in \Gamma(X,\mathscr E \otimes \mathscr L)$, the zero locus $Z(\sigma) \subseteq X$ is smooth of
codimension~$r$ (and nonempty if and only if $r \leq n$).
\end{theorem}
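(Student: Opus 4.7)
The plan is to reduce to the classical Bertini smoothness theorem for line bundles by passing to the projective bundle $\pi \colon P = \PP_X(\mathscr E^\vee) \to X$ with tautological quotient $\pi^* \mathscr E \twoheadrightarrow \OO_P(1)$. By the projection formula, $\pi_* \mathscr M = \mathscr E \otimes \mathscr L$ where $\mathscr M := \OO_P(1) \otimes \pi^* \mathscr L$, giving a natural bijection $\sigma \leftrightarrow \tilde\sigma$ between sections of $\mathscr E \otimes \mathscr L$ on $X$ and sections of the line bundle $\mathscr M$ on $P$. Observe that $Z(\sigma) \subseteq X$ is exactly the set of $x \in X$ whose fibre $\pi^{-1}(x) \cong \PP^{r-1}$ is entirely contained in $\tilde D := Z(\tilde\sigma) \subseteq P$.

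The first main step is to verify that $\mathscr M$ is very ample on $P$. Global generation of $\mathscr E \otimes \mathscr L$ (being the tensor of two globally generated sheaves) makes the restriction $H^0(X, \mathscr E \otimes \mathscr L) \twoheadrightarrow \mathscr E_x \cong H^0(\pi^{-1}(x), \mathscr M|_{\pi^{-1}(x)})$ surjective for every $x \in X$, so $\mathscr M$ separates points and tangent vectors within each fibre via $\OO_{\PP^{r-1}}(1)$. To separate two points in distinct fibres over $x_1 \neq x_2$, I would produce $s \in H^0(X, \mathscr L)$ vanishing at $x_1$ but not at $x_2$ using very ampleness of $\mathscr L$, and $t \in H^0(X, \mathscr E)$ hitting a prescribed nonzero value at $x_2$ using global generation of $\mathscr E$; then $s \otimes t$ corresponds to a section of $\mathscr M$ vanishing on all of $\pi^{-1}(x_1)$ but nonzero at a preassigned point of $\pi^{-1}(x_2)$. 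Separation of tangent vectors across fibres is entirely analogous.

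Given that $\mathscr M$ is very ample, the classical Bertini smoothness theorem (valid in arbitrary characteristic for very ample line bundles on smooth projective varieties) shows that for a general $\sigma$ the zero divisor $\tilde D$ is smooth of dimension $n + r - 2$. The final step is the translation back to $X$ via a local Jacobian computation: trivialising $\mathscr E \otimes \mathscr L$ on an open neighbourhood $U \ni x_0$ and writing $\sigma = (\sigma_1, \dots, \sigma_r)$, the divisor $\tilde D$ is cut out on $U \times \PP^{r-1}$ by $\sum_i \sigma_i y_i = 0$; away from $Z(\sigma)$ the divisor $\tilde D$ is automatically smooth. At a point $(x_0, [y])$ with $x_0 \in Z(\sigma)$, all $\sigma_i(x_0)$ vanish, so the $y$-partial derivatives vanish and smoothness of $\tilde D$ forces some $x$-partial $\sum_i y_i \, \partial\sigma_i/\partial x_j(x_0)$ to be nonzero; demanding this for every $[y] \in \PP^{r-1}$ is exactly the condition that the Jacobian $\big(\partial\sigma_i/\partial x_j\big)(x_0)$ has rank $r$, equivalently that $Z(\sigma)$ is smooth of codimension $r$ at $x_0$. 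The emptiness claim for $r > n$ follows from a dimension count on the incidence variety $\{(\sigma, x) : \sigma(x) = 0\} \subseteq H^0(X, \mathscr E \otimes \mathscr L) \times X$, which is a vector bundle of corank $r$ over $X$ and hence has dimension strictly less than $\dim H^0(X, \mathscr E \otimes \mathscr L)$ when $r > n$; nonemptiness for $r \leq n$ follows from Fulton--Lazarsfeld positivity of the top Chern class of the ample bundle $\mathscr E \otimes \mathscr L$. The main obstacle I anticipate is the very ampleness of $\mathscr M$: separating points and tangent vectors in distinct fibres requires a careful combination of very ampleness of $\mathscr L$ with global generation of $\mathscr E$, whereas the remaining steps are formal consequences of classical Bertini and a direct local computation.
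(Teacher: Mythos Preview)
Your argument is correct, and in fact it does more than the paper: the paper's ``proof'' is a one-line citation to \cite[Cor.~3.6 and Rmk.~3.2(iii)]{kleiman}, whereas you supply a self-contained reduction to the classical Bertini theorem for very ample line bundles via the projective bundle $P$. The three ingredients --- very ampleness of $\mathscr M = \OO_P(1)\otimes\pi^*\mathscr L$, classical Bertini on $P$, and the local Jacobian translation showing that smoothness of $\tilde D$ along the whole fibre $\pi^{-1}(x_0)$ is equivalent to $\operatorname{rank}\bigl(\partial\sigma_i/\partial x_j\bigr)(x_0)=r$ --- fit together exactly as you describe, and your dimension count for emptiness when $r>n$ is the standard one.

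Two small points worth tightening. First, a notational slip: in the paper's Grothendieck convention $\PP(\mathscr F)=\PROJ\,\Sym^*\mathscr F$ with tautological quotient $\pi^*\mathscr F\twoheadrightarrow\OO(1)$, the bundle satisfying $\pi_*\OO_P(1)=\mathscr E$ is $P=\PP_X(\mathscr E)$, not $\PP_X(\mathscr E^\vee)$; all of your subsequent formulas are consistent with the former, so this is purely cosmetic. Second, for the nonemptiness claim when $r\leq n$ you invoke Fulton--Lazarsfeld, which is stated in characteristic zero; the relevant input here, that $c_r$ of an ample bundle has strictly positive degree against an ample class, is the earlier theorem of Bloch--Gieseker, valid in all characteristics. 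With that substitution your argument goes through unchanged over the fields of positive characteristic that the paper actually cares about.
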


As usual, \emph{general} means that the conclusion holds on a dense Zariski open in the space of sections of $\mathscr E \otimes \mathscr L$. In particular there exists a $k$-point when $k$ is infinite.

\begin{proof}
See \cite[Cor.~3.6 and Rmk.~3.2(iii)]{kleiman}.
\end{proof}
\vskip-\lastskip
This gives the following variant of \cite[Thm.~1.12]{maruyama}:

\begin{proposition}\label{prop:elementary}
Let $S$ be a smooth projective variety of dimension $\leq 2$ over an infinite field $k$, and let $\mathscr E$
and $\mathscr E'$ be vector bundles on $S$ of the same rank $r$. Then there exists an elementary transformation
between $X = \PP_S(\mathscr E)$ and $X' = \PP_S(\mathscr E')$. Moreover, if $Z_1,\ldots,Z_m \subseteq X$
are smooth subvarieties, we may assume that $Y \cap Z_i$ is smooth for all $i$.
\end{proposition}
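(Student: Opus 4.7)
By the definition of elementary transformation, I need to produce a smooth divisor $D \subseteq S$, a line bundle $\mathscr L$ on $S$, and a surjection $\mathscr E|_D \twoheadrightarrow \mathscr F$ onto a vector bundle $\mathscr F$ on $D$ whose kernel on $S$ is isomorphic to $\mathscr E'\otimes\mathscr L$. Equivalently, I seek an injective morphism $\phi\colon \mathscr E'\otimes\mathscr L \hookrightarrow \mathscr E$ whose cokernel is a line bundle on a smooth divisor. Such a $\phi$ is a global section of the vector bundle $\mathscr H := \mathscr{H}\!om(\mathscr E', \mathscr E) \otimes \mathscr L^{-1}$, so I first choose $\mathscr L$ with $\mathscr L^{-1}$ very ample enough to make $\mathscr H$ very ample and globally generated. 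Let $V := H^0(S, \mathscr H)$; the candidate elementary transformation will arise from a sufficiently general $\phi \in V$.

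\textbf{Controlling the rank stratification.} In the fibre $\operatorname{Hom}(\mathscr E'(x), \mathscr E(x))$, the locus of maps of rank at most $r - k$ has codimension $k^2$. Global generation of $\mathscr H$ means the evaluation $V \to \mathscr H(x)$ is surjective for every $x \in S$, so the incidence $I_k := \{(\phi, x) \in V \times S : \operatorname{rk}\phi(x) \leq r - k\}$ has codimension $k^2$ in $V \times S$. For $k = 2$ this codimension is $4$, and since $\dim S \leq 2 < 4$, the projection $I_2 \to V$ is not dominant; hence a general $\phi \in V$ has empty rank-$\leq r-2$ locus in $S$. For such $\phi$, the degeneracy divisor $D := \{\det\phi = 0\}$ is precisely the rank-$(r-1)$ locus, so the cokernel $\mathscr F$ is automatically a line bundle on $D$ and $\phi$ is injective. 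This yields an exact sequence $0 \to \mathscr E'\otimes\mathscr L \to \mathscr E \to \mathscr F \to 0$ of the desired shape, conditional on $D$ being smooth.

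\textbf{Smoothness of $D$ and of each $Y \cap Z_i$.} The incidence $I_1 \subseteq V \times S$ is smooth of pure codimension $1$ over the open subset $V^\circ \subseteq V$ of those $\phi$ with empty higher-corank locus, because the universal determinantal hypersurface in the total space of $\mathscr H$ is singular only along the rank-$\leq r-2$ stratum. A Kleiman--Bertini-type argument in the spirit of Theorem \ref{theorem:Bertini}, applied now to the very ample line bundle $\det \mathscr H$ with $\det \phi$ as the relevant section, then shows that the generic fibre of $I_1 \to V^\circ$ is smooth, giving smoothness of $D$. By Lemma \ref{lemma:centre}, the subvariety $Y \subseteq X$ is cut out by the image of a morphism depending linearly on $\phi$; a further open-density argument using the same Bertini input, applied to each $Z_i$ in turn, ensures that $Y \cap Z_i$ is smooth for every $i$. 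The main obstacle is the Bertini step for the degeneracy locus, since Theorem \ref{theorem:Bertini} is stated for zero loci of vector bundle sections rather than rank-drop loci of morphisms; the plan sidesteps this by using the dimension bound $\dim S \leq 2$, which is essential here, to eliminate the bad corank stratum, thereby reducing smoothness of $D$ to Bertini for the single section $\det \phi$ of a line bundle.
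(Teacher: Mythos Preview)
Your strategy—constructing the elementary transformation directly on $S$ as the cokernel of a general map $\phi\colon\mathscr E'\otimes\mathscr L\to\mathscr E$—is natural, and the corank-$2$ dimension count is fine. The gap is the smoothness of $D$. You propose to deduce it from Bertini applied to the section $\det\phi$ of $\det\mathscr H$, but the assignment $\phi\mapsto\det\phi$ is a degree-$r$ polynomial map, not a linear one; as $\phi$ ranges over $V$, the sections $\det\phi$ do not form a linear system in $|\det\mathscr H|$, so neither classical Bertini nor \autoref{theorem:Bertini} applies to them. Your implicit fallback—that the generic fibre of the smooth variety $I_1\setminus I_2$ over $V^\circ$ is smooth—is generic smoothness of a morphism, which is unavailable in positive characteristic, precisely the setting of the paper. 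A characteristic-free Bertini theorem for degeneracy loci exists, but it needs a jet-spanning hypothesis on $\mathscr H$ and a separate argument; it is not a corollary of \autoref{theorem:Bertini}. The closing claim about $Y\cap Z_i$ is left as a vague ``further open-density argument'' and inherits the same problem.

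The paper avoids all of this by moving upstairs to $X=\PP_S(\mathscr E)$. A map $\phi$ corresponds to a section $\sigma$ of the rank-$r$ bundle $\pi^*(\mathscr E')^\vee\otimes\OO_\pi(1)$ on $X$, and the centre $Y$ is the honest zero locus $Z(\sigma)$. Now \autoref{theorem:Bertini} applies on the nose, in any characteristic, to make $Y$ and each $Y\cap Z_i$ smooth simultaneously. One then checks that $Y$ contains no vertical line (a short normal-bundle computation using $\dim S\le 2$), so $Y\to D:=\pi(Y)$ is an isomorphism and $D$ is smooth for free; a Koszul-complex pushforward identifies $\ker(\mathscr E\to\mathscr F)$ with $\mathscr E'$. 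In effect the projective bundle linearises the determinantal condition, which is exactly the step your downstairs argument is missing.
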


Maruyama's version deals with the case that $\mathscr E'$ is a trivial bundle, and does not have the final statement.
Note that the final statement is not symmetric in $\mathscr E$ and $\mathscr E'$, and we will apply the result
when $\mathscr E$ is trivial (which is dual to Maruyama's version). Maruyama's result extends to threefolds,
which we will not pursue here.

\begin{proof}[Proof of \autoref{prop:elementary}]
Let $\mathscr L$ be a very ample line bundle on $S$. Up to twisting $\mathscr E'$ by a power of $\mathscr L$
and replacing $\mathscr L$ by a power, we may assume that $(\mathscr E')^\vee \otimes \mathscr L^{-1}$ is
globally generated and $\pi^* \mathscr L \otimes \OO_\pi(1)$ is very ample.
By \autoref{theorem:Bertini} there exists a regular section $\sigma$ of $\pi^*(\mathscr E')^\vee \otimes \OO_\pi(1)$
such that $Y = Z(\sigma)$ and the $Y \cap Z_i$ are smooth.

For each $s \in S$, the intersection $Y \cap X_s$ is given by a section of $\OO_\pi(1)^r$,
hence is a linear subspace. Since $\dim S \leq 2$, we have $\dim Y = (\dim S + r -1) - r \leq 1$.
If $Y$ contains a vertical line $\ell \subseteq X_s$ for some $s \in S$, then $\ell$ is a component of $Y$,
and the normal bundle
\[
\mathscr N_{\ell/X} = \Big(\pi^*(\mathscr E')^\vee \otimes \OO_\pi(1)\Big)\Big|_\ell = \OO_\ell(1)^r
\]
has a trivial quotient $\mathscr N_{X_s/X}|_\ell = \OO_\ell^2$, which is impossible.
Thus if $D \subseteq S$ is the scheme-theoretic image of $Y$ in $S$, then $D$ is reduced and all fibres of $Y \to D$
have length $1$, so $Y \to D$ is an isomorphism. Hence $Y$ corresponds to a section $D \to \PP_D(\mathscr E|_D)$,
i.e.\ a $1$-dimensional quotient $\mathscr E|_D \twoheadrightarrow \mathscr F$.
Since $\sigma$ is a regular section, the Koszul complex
\begin{equation}\label{eq:koszul}
0 \to K_r \to \ldots \to K_1 \to K_0 \to \OO_Y \to 0
\end{equation}
is exact, where $K_i = \pi^*\big(\raisebox{.1em}{$\bigwedge$}^i \mathscr E'\big) \otimes \OO_\pi(-i)$.
The projection formula gives
\[
R\pi_*\big(K_i \otimes \OO_\pi(1)\big) = \raisebox{.1em}{$\textstyle\bigwedge$}^i \mathscr E' \otimes R\pi_*\big(\OO_\pi(-i+1)\big).
\]
Since $\pi \colon X \to S$ is a $\PP^{r-1}$-bundle, we have $R\pi_*\big(\OO_\pi(-i+1)\big) = 0$
for $1 < i \leq r$ and $R^1\pi_* \OO_X = 0$. Therefore, twisting \eqref{eq:koszul} by $\OO_\pi(1)$
and pushing forward to $S$ gives a short exact sequence
\[
0 \to \pi_*\big(K_1 \otimes \OO_\pi(1)\big) \to \pi_*\big(\OO_\pi(1)\big) \to \pi_*\!\left(\OO_\pi(1)\big|_Y\right) \to 0.
\]
Since $K_1 \otimes \OO_\pi(1) = \pi^*\mathscr E'$, this sequence reads
\[
0 \to \mathscr E' \to \mathscr E \to \mathscr F \to 0,
\]
so \autoref{thm:maruyama} gives the desired elementary transformation.
\end{proof}

\bibliographystyle{alphaurledit}
\bibliography{char-p}

\end{document}